\newtheorem{lemma}{Lemma}
\newtheorem{theorem}{Theorem}
\newtheorem{definition}{Definition}
\newtheorem{proposition}{Proposition}
\newcommand{\<}{\langle}
\renewcommand{\>}{\rangle}
\newcommand{\M}{\mathcal{M}}
\newcommand{\ol}{\overline}
\newcommand{\beq}{\begin{equation*}}
\newcommand{\eeq}{\end{equation*}}
\numberwithin{equation}{section}
\numberwithin{theorem}{section}
\numberwithin{definition}{section}
\numberwithin{lemma}{section}
\numberwithin{proposition}{section}
\title{Hydrodynamic limit of a boundary-driven elastic exclusion process and a Stefan problem}
\author{Joel Barnes\\
\small{University of Washington}\\
\small \texttt {joel@math.washington.edu}
}
\begin{document}
\maketitle
\begin{abstract}
Burdzy, Pal, and Swanson \cite{Burdzy10} considered solid spheres of small radius moving in the unit interval, reflecting instantaneously from each other and at $x=0$, and killed at $x=1$, with mass being added to the system from the left at rate $a$. By transforming to a system with zero-width particles moving as independent Brownian motion, they derived a limiting stationary distribution for a particular initial distribution, as the width of a particle decreases to zero and the number of particles increases to infinity. This space-removing transformation has a direct analogy in the isomorphism between a new unbounded-range exclusion process and a superimposition of random walks with random boundary. We derive the hydrodynamic limit for these isomorphic processes, demonstrating that this elastic exclusion is an appropriate model for the reflecting Brownian spheres in one dimension.  \end{abstract}
\setcounter{section}{-1}
\section{Preliminaries}
This paper is primarily concerned with the hydrodynamic limit of a exclusion process $Z_t$ on a bounded one-dimensional lattice of grid size $1/N$ with the following dynamics: a particle $p$ moves into an adjacent unoccupied site at rate proportional to the size of the block of occupied particles of which $p$ is a member. We think of each particle in the block as having internal energy transferred to the outermost particle elastically. In addition, the process is boundary driven: at constant rate, the leftmost block of particles (possibly empty) is shifted to the right one position, and a new particle is added to the vacant first position. Finally, particles are killed when they move to the rightmost site. 

We call the model boundary-driven elastic exclusion. We are interested in the limiting shape of the empirical distribution for all times as the grid size scales to zero and the dynamics scale appropriately, and in theorem 5.1 we prove that this hydrodynamic limit satisfies the differential equation
\begin{align}\label{eq:hl}
\partial_t z(x,t) = \partial_x\left(\frac{1}{(1-z(x,t))^2}\partial_x z(x,t)\right),
\end{align}
with appropriate boundary conditions. This particle system was chosen to approximate the system of one-dimensional crowded Brownian spheres defined in \cite{Burdzy10}, and the connection is of interest because the hydrodynamic limit of the exclusion process $Z_t$ matches the conjectured hydrodynamic limit of the Brownian process in that paper. Because of the connection, and because the method of proof in that paper also describes the key isomorphism that we use to derive the limiting equation, we briefly describe that process and the key transformation.  H. Rost \cite{Rost84} also considered reflecting Brownian intervals and derived the hydrodynamic limit above in the case of the entire real line.

Consider intervals $I_t^k = (B_t^k,B_t^k+1/N)$, such that when $B_t^k\geq0 $ and $ |B_t^k-B_t^j|>1/N$ for all $j$ such that $ j\neq k$, $B_t^k$ moves as independent Brownian motion.  Intervals $I_t^k$ reflect instantaneously and symmetrically, and are killed when $B_t^k+1/N=1$.  Finally, for $k$ greater than some $k_0$, $B_0^k = -(k-k_0)/N$ and $B_t^k = B_0^k + at$ until $B_t^k=0$, so that particles continuously enter the interval at rate $aN$.  In order to derive the limiting stationary distribution in the case $k_0=0$, the authors of \cite{Burdzy10} consider the transformations $T_t: (-\infty,1] \rightarrow [0,S_t]$ with
\begin{equation*}
  T_t(x) =  \begin{cases}
  		0 & \text{for $x\leq0$},\\
		x - \int_0^x \mathbb{1}_{\bigcup_k I_t^k}(z) dz & \text{for $0<x\leq 1$}.
		\end{cases}
\end{equation*}
$T_t$ maps $I_t^k$ to a point $C_t^k$, and simply translates unoccupied space, so the $C_t^k$ move as independent, symmetrically reflecting Brownian motions with drift $-adt$, due to the continually inserted intervals.  Furthermore, $T_t(1) = S_t$ is a random boundary that changes proportionally to the number of particles entering or leaving the system.  Since the distribution of symmetrically reflecting Brownian motions is identical to that of independent particles, we are reduced to the case of independent $dA^k_t = dW_t^k - adt$, reflecting at $0$ and killed at $S_t$.  This leads us to conjecture the following hydrodynamic limit, which is a form of the well-known Stefan melting-freezing problem:
 \begin{definition}
Given $s_0\geq0$, $v_0\in C^1([0,s_0])$ with $s_0 = 1-\int_0^{s_0}v_0(x)dx$, and $a>0$, a pair $(v, s)$ such that $s\in C^1([0,T])$, $s(0)=s_0, s>0,$ and $v\in C^2(D_T)\cap C^1(\overline{D_T})$, where $D_T = \{(x,t): 0<x<s(t), 0<t\leq T\},$ satisfying
\begin{align}\label{eq:stefan1}
	&\partial_tv(x,t) = \partial_{xx}v(x,t) + a\partial_xv(x,t) \quad &0<x<s(t), t>0, \\
	&v(x,0) = v_0 &0\leq x\leq s(0)\label{equation:5},\\
	&(\partial_xv(x,t)+av(x,t))|_{x=0} = -a &t>0,\\
	&v(s(t),t)=0 &t>0, \\
	&s(t) = 1 - \int_0^{s(t)}v(x,t)dx &t\geq0\label{eq:stefan2},
\end{align}
is called a \emph{solution to the Stefan problem} \ref{eq:stefan1}-\ref{eq:stefan2} \emph{with initial data} $(v_0,s_0)$.
\end{definition}
Condition (\ref{eq:stefan2}) is more familiar in the differential form $s'(t) = -a - \partial_xu(x,t)|_{s(t)}$. The Stefan problem has been well studied in many forms, though perhaps not this exact form. The book \cite{Meirmanov92} by Meirmanov is an excellent reference. In particular, with $a=0$, this equation is the classical one-dimensional, one-phase melting probem, where in the region $0\leq x< s(t)$, $v$ represents the temperature of water above freezing, and $x\geq s(t)$ represents a region of ice with temperature $0$.  Strong existence and uniqueness of this case is covered in Cannon's book \cite{Cannon84}. There is every reason to believe existence holds for the equation above as well, due to the natural physical model and intrisic boundedness, but we do not take it up in this paper.  We should also note that the condition $v_0 \in C^1([0,1])$ may not be necessary, but serves only to make the definitions simpler. Indeed, the main theorem below holds whenever the initial condition is in $L^2$ and the solution satisfies the integral form \ref{eq:weakstefan}. The Stefan problem has been studied in a probabilistic context as well, as a hydrodynamic limit by Chayes and Swindle \cite{Chayes96}, Gravner and Quastel \cite{Gravner00}, Landim and Valle \cite{Landim06}, and Bertini et al \cite{Bertini99}. In \cite{Chayes96}, \cite{Landim06}, and \cite{Bertini99}, the particle model is simple exclusion, with different particle types representing the liquid and solid regions. Our model is close to that of Gravner and Quastel, who use the Stefan hydrodynamic limit of a zero-range process to prove shape theorems for internal diffusion-limited aggregation, but our proofs are not similar and the application is different. 

We now describe the second discrete process, $Y_t^N$,  which is the discrete analogue to the distribution of the transformed Brownian motion process. For notational simplicity we will omit $N$ from the process, but it will always be used in the corresponding probability measure $P^N$.  Let
\begin{align*}
A_N = \left\lbrace \frac{1}{2N}, ... , \frac{2j+1}{2N}, ... ,\frac{2N-1}{2N}\right\rbrace.
\end{align*}
  Our state space for $Y_t$ is the subset $\Omega_N$ of $\mathbb{N}^{A_N}$ such that for $\eta \in \Omega_N$, $\eta_x$ counts the number of particles at site $x$ for a distribution of particles on $A_N$ with the following restriction: there must be $M$ particles, with $M< N$, and the particles may only occupy sites $x=(2j+1)/2N$ with $j< N-M$.  Let $M_t$ be the number of particles at time $t$, and define a random boundary $S_t$ with \begin{align*}
S_t= 1 - \frac{M_t}{N} + \frac{1}{2N}.
\end{align*}
At exponential random times with rate $N^2$ for each direction, particles move as independent random walks, reflecting at the leftmost site.  If a particle hits $S_t$ at time $t$, it is killed (removed from the system), and $S_t=S_{t-}+1/N$.  In addition, there is a drift effect, occurring at rate $aN$, for $a\geq 0$ constant, where every particle except those at zero shift one site towards the origin, an additional particle is added at $1/2N$, and $S_t$ shifts one site left. The only state $\eta\in \Omega_N$ for which this does not happen is $\eta_{1/2N} = N-1$, in which case there is no change (think of the generated particle being immediately killed).
 
 The sum of delta masses of weight $Y_t/N$ at each site gives a measure $\mu_{Y_t}$ or just $\mu_t$, depending on context, of mass less than one.  The object $\mu_{\cdot}$ is an element of the Skohorod space of right-continuous paths on the metric space $\mathcal{M}$ of positive measures on $[0,1]$, $D([0,1],\mathcal{M})$. Let $P^N$ be the probability measure on right-continuous paths in $\Omega_N$ that determines the process $Y_t$. Let the corresponding probability measure on $D([0,1],\mathcal{M})$ be $Q^N$. The hydrodynamic limit of the $Q^N$ is the subject of our first theorem. For technical reasons, it is more natural to state the convergence in terms of a process $X_t$, described below in detail, such that $\rho(X_t) = Y_t$, where $\rho(x) = (x-1)\lor 0$. $\Omega_N^*$ is the state space for $X_N$, in one-to-one correspondence with $\Omega_N$. A precise version of the following theorem is found at \ref{thm:2}.

\begin{theorem}\label{thm:1}
Suppose that for each $P^N$, $X^N_0$ converges weakly to a measure with fixed density $u_0\in L^2([0,1])$. Then the empirical measures of $X_t^N$ converge to the unique solution of a weak version (\ref{eq:weakstefan}) of the Stefan problem (\ref{eq:stefan1})-(\ref{eq:stefan2}) with initial data $u_0$. If a solution for the problem (\ref{eq:stefan1})-(\ref{eq:stefan2}) with initial data $(v_0,s_0) = (\rho(u_0),\inf\{x:u_0(x)=0\})$ exists, then the empirical measures of the process $Y_t$ converges to that solution in probability.
\end{theorem}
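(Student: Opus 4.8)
The plan is to run the classical three-step hydrodynamic programme --- tightness, identification of subsequential limits as weak solutions, and uniqueness --- and then to transfer the conclusion from $X_t$ to $Y_t$ through the explicit correspondence $\rho$. The feature that makes this tractable, and that I would lean on at every stage, is the isomorphism described above: after the space-removing transformation the particles of $Y_t$ move as \emph{independent} random walks, reflected at the origin, pushed by the insertion drift, and killed at the random boundary $S_t$. Because the particles do not interact in the bulk, the martingale produced by Dynkin's formula closes \emph{exactly} in terms of the empirical density, so none of the usual Guo--Papanicolaou--Varadhan replacement machinery for local equilibrium is needed; the entire analytic difficulty is concentrated at the moving boundary.

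First I would fix a smooth test function $\phi$ and form the Dynkin martingale
\[
M_t^\phi = \langle \mu_t,\phi\rangle - \langle \mu_0,\phi\rangle - \int_0^t \langle \mu_s,\mathcal{L}_N\phi\rangle\,ds .
\]
Computing $\mathcal{L}_N\phi$ term by term: the bulk random-walk generator contributes $N^2$ times the discrete Laplacian and $aN$ times the discrete gradient, which converge to $\partial_{xx}\phi + a\,\partial_x\phi$ and reproduce the bulk equation (\ref{eq:stefan1}); the reflection-plus-insertion rule at the left endpoint yields, after summation by parts, the Robin flux condition $(\partial_xv+av)|_{x=0}=-a$, reflecting the constant influx $a$; and the killing rule at $S_t$, together with the accompanying jump $S_{t-}\mapsto S_{t-}+1/N$, simultaneously forces the Dirichlet condition $v(s(t),t)=0$ and supplies the boundary bookkeeping that becomes the Stefan condition. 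A direct estimate of the predictable quadratic variation gives $\mathbb{E}\big[(M_t^\phi)^2\big]=O(1/N)$, using the uniform bound $\mu_t([0,1])<1$, so the martingale is asymptotically negligible and every subsequential limit satisfies the integral identity (\ref{eq:weakstefan}).

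For tightness I would verify the Aldous criterion for the real-valued processes $t\mapsto\langle\mu_t,\phi\rangle$: the bulk and boundary generators are uniformly bounded against smooth $\phi$ once mass is capped at one, and the martingale part is $O(1/N)$, so the required modulus-of-continuity estimate follows; the compact mass bound then gives tightness of $Q^N$ in $D([0,1],\M)$. The free boundary is pinned down through the identity $S_t = 1 - M_t/N + 1/(2N)$: since $M_t/N$ is exactly the total mass of $\mu_t$, any limiting boundary satisfies $s(t)=1-\int_0^{s(t)}v(x,t)\,dx$, i.e.\ (\ref{eq:stefan2}), and combining this with the bulk equation and the two boundary conditions recovers the familiar differential form $s'(t)=-a-\partial_xv(x,t)|_{s(t)}$.

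The step I expect to be the genuine obstacle is the control of this moving boundary. One must show that the random curve $S_t$ converges to a sufficiently regular curve $s(t)$ and that the particle current near $S_t$ is controlled well enough that the limiting identity holds as a bona fide weak solution; because $S_t$ is driven by two competing mechanisms --- decreasing under insertion, increasing under killing --- it is a priori only of bounded variation, and establishing its regularity together with the boundary behaviour is where the estimates are most delicate. I would attack it through the monotone mass bookkeeping, an energy estimate for the weak solution near $x=s(t)$, and the $L^2$ convergence of $X_0^N$ to fix the initial data. Uniqueness of the weak solution (\ref{eq:weakstefan}) --- which, given the linear bulk operator, I would obtain from a standard $L^2$ energy and comparison argument in the spirit of Cannon's treatment of the $a=0$ case --- then collapses all subsequential limits to a single point and upgrades convergence in law to convergence in probability. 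Finally, since $\rho(X_t)=Y_t$ with $\rho(x)=(x-1)\lor0$ and $\rho(u_0)=v_0$, pushing forward under the explicit continuous map $\rho$ carries the convergence of the $X_t^N$ empirical measures to that of the $Y_t$ empirical measures; when a classical solution of (\ref{eq:stefan1})--(\ref{eq:stefan2}) exists it coincides with the weak solution, yielding the stated convergence in probability to that solution.
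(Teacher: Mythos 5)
There is a genuine gap, and it sits at the center of your plan. Your key structural claim --- that because the particles are independent in the bulk, the Dynkin martingale ``closes exactly in terms of the empirical density,'' so that the only analytic difficulty is the moving boundary --- is false for this process. By the paper's computation (\ref{equation:generator}), the drift term of the martingale is
\begin{equation*}
\mathcal{L}_N\langle f, X_s\rangle_N \;=\; \langle \Delta_N f, \rho(X_s)\rangle_N \;-\; a\,\langle D_N f, X_s\rangle_N ,
\end{equation*}
and the first term involves $\rho(X_s(x)) = (X_s(x)-1)\lor 0$ sitewise (equivalently, in the $Y_t$ picture, the killing at $S_t$ is driven by the occupation of the site adjacent to the boundary). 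This is a nonlinear, local function of the configuration and is \emph{not} a continuous functional of the empirical measure $\mu_s$, so tightness plus weak convergence alone do not let you pass to the limit in the integral identity; your term-by-term derivation of the Robin, Dirichlet and Stefan conditions is heuristic and never confronts this. Closing this gap is exactly the content of Lemma \ref{lem:weaksolution}: one mollifies ($M_\epsilon$), uses the structural fact that every site below $S_t$ carries at least one particle so that $\rho(M_\epsilon\mu)\approx M_\epsilon\mu-1$ on the occupied region, and then invokes an a priori $L^2$ bound to replace $\rho(M_\epsilon\mu_s)$ by $\rho(u(\cdot,s))$. That $L^2$ bound is itself a separate and nontrivial step (Lemma \ref{lem:l2}, a discrete $H_{-1}$ estimate using the eigenvectors $\sqrt 2\cos(\pi k x)$ of $\Delta_N$), and it is why Theorem \ref{thm:2} carries the hypothesis $\sup_N E^N[\frac{1}{N}\sum_x X_0(x)^2]<\infty$, which your proposal never uses. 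Relatedly, your plan to prove that the random curve $S_t$ converges to a regular limit and to control the current near it is precisely what the formulation (\ref{eq:weakstefan}) is designed to avoid: the weak (enthalpy) equation $\partial_t u = \partial_{xx}\rho(u) - a\partial_x u$ contains no free boundary at all, and the paper never proves, nor needs, any regularity of $S_t$. Attacking the boundary directly would be much harder than the route actually taken.

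The uniqueness step is also mischaracterized. The weak equation is not ``linear bulk plus boundary conditions'': the second derivative acts on $\rho(u)$, a degenerate nonlinearity, and the solution is known only to lie in $L^2$, so energy/comparison arguments in the style of Cannon's treatment of classical free-boundary solutions do not apply. The paper's Lemma \ref{lem:weakuniqueness} is an $H_{-1}$ duality argument: expand $\ol{u}=u-u'$ in the Neumann basis $\psi_k$, weight by $k^{-2}$, and use the monotonicity of $\rho$, i.e.\ $\langle \ol{u}_t,\ol{\rho}_t\rangle\ge 0$, to dispose of the nonlinear term, while the drift contribution is killed by an antisymmetry and absolute-convergence computation with the overlaps $\langle\phi_k,\psi_j\rangle = 2k\sigma(k,j)/\pi(k^2-j^2)$. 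Finally, your last step inherits the first gap: $\mu_{\rho(X_t)}$ is not the pushforward of $\mu_{X_t}$ under a continuous map (passing from $X_t$ to $Y_t=\rho(X_t)$ subtracts the unknown indicator $\mathbb{1}_{[0,S_t)}$, one particle per occupied site), so ``pushing forward under $\rho$'' is not available; the convergence $\mu_{\rho(X_t)}\to\rho(u(\cdot,t))\,dx$ is instead extracted from inside the proof of Lemma \ref{lem:weaksolution}. Your tightness and quadratic-variation estimates do match the paper's Lemma \ref{lem:relativecompactness}, but the three substantive pillars --- the $L^2$ bound, the identification of the nonlinear term, and uniqueness for the degenerate equation --- are missing or would fail as proposed.
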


The weak version of the problem is defined in section 3. The methods used to derive the hydrodynamic limit are largely based on those in the book  \cite{Kipnis99} by Kipnis and Landim, so we identify our contribution in two areas. First, although the Stefan problem has been well studied as a hydrodynamic limit, the exclusion process we describe and the application of the free boundary problem to such a process is new. Hydrodynamics of exclusion processes is an active field of research, but the most general results are for gradient systems with finite-range interactions, as in \cite{Farfan11}. The interactions of $X_t$ have unbounded range.  Second, the simultaneous drift effect of the transformed process is unusual, but required by the isomorphism.  The nonstandard process and the simple setting in the unit interval allows for an interesting application of elementary harmonic analysis for the $H_{-1}$ bound and the uniqueness proof. 

 Our general approach, following \cite{Gravner00}, is to make the free boundary go away by building it into the zero-range dynamics of a process $X^N_t$ as described below.  From the other direction, the differential equation transforms into a nonlinear integral equation which mirrors the form of the process. The proof of Theorem \ref{thm:1} is in four steps. In Lemma \ref{lem:relativecompactness}, we prove that the Markov process describes a relatively compact sequence of probability measures. Lemma \ref{lem:l2} guarantees that any limit points lie in $L^2([0,1]\times[0,T])$ almost surely. Lemma \ref{lem:weaksolution} shows that such limit points must satisfy a weak version of equation (\ref{eq:stefan1})-(\ref{eq:stefan2}). Finally, Lemma \ref{lem:weakuniqueness} proves that the solution of such an equation is unique. Combining these results, we see that the process converges to a measure which is the delta measure on the solution of an integral form of the problem which coincides with the solution to that problem when it exists. 

In section 5, we show that the two discrete processes described in this section are in fact isomorphic, and use the isomorphism to prove the hydrodynamic limit (\ref{eq:hl}) in Theorem \ref{thm:5}. 

\section{Construction and relative compactness}

We construct a Markov process $X_t^N$, henceforth $X_t$, by defining its infinitesimal generator. For $N>0$ let $A_N = \{1/2N,3/2N,..(2N-1)/2N\} $ and $ \mathcal{M}_N = \mathbb{N}^{A_N}$.  Let $\mathcal{M}$ be the set of finite measures on $[0,1]$, and we associate $\eta\in\mathcal{M}_N$ with its empirical measure in $\mathcal{M}, \mu_\eta=\sum_{x\in A_N} \frac{\eta_x}{N}\delta_{x}$.  Consider the following generator on functions $f: \mathcal{M}_N \rightarrow \mathbb{R}$:
\begin{align*}
\mathcal{L}_Nf(\eta) =&{} \mathcal{L}^1_Nf(\eta) + \mathcal{L}^2_Nf(\eta),\\\noalign{\vskip 2mm}
\mathcal{L}^1_Nf(\eta) =&{} N^2 \sum_{x\in A_N}\lambda (\eta_x ) \left[f(\eta^{x,x-1/N})-f(\eta)+f(\eta^{x,x+1/N})-f(\eta)\right],\\
\mathcal{L}^2_Nf(\eta) =&{} aN\left(f(\sigma(\eta))-f(\eta)\right),
\end{align*}
where $\rho(x) = (x-1)\lor 0$,
\begin{equation*}
\eta^{x,x+i/N}_y  = \begin{cases}
			\eta_{y}-1 &\text{for $y=x \text{ and }x + i/N \in A_N$ },\\
			\eta_{y}+1 &\text{for $y = x + i/N$},\\
			\eta_{y} &\text{otherwise},
			\end{cases}
\end{equation*}
and
\begin{equation*}
\sigma(\eta)_{y} = \begin{cases}
	\eta_{y}+\eta_{y+1/N} &\text{for $y=1/2N$},\\
	0 &\text{for $y=(2N-1)/2N$},\\
	\eta_{y+1/N} &\text{otherwise}.
	\end{cases}
\end{equation*}

The appendix of \cite{Kipnis99} describes the construction of a Markov process $X_t$ on $\mathcal{M}_N$ from such a generator (such that $d/dtE\left[f(X_t)\mid X_s=\eta\right]|_{t=s}=\mathcal{L}f(\eta)$), the idea being that states are changed at the minimum of exponential random times with rates $N^2\rho(\eta_x)$ and $aN,$ to the corresponding state, with the minimum itself being an exponential random time, well defined almost surely. 

 Let  $\Omega'_N$ be the set of states $\eta$ such that $\sum_{x \in A_N}\eta_x = N$, $\eta_x>0$ for $x$ less than some $b$ and $\eta_x=0$ for $x \geq b$.  When $X_0 \in \Omega'_N$ a.s., the process $\rho(X_t)$ is equal in distribution to the process $Y_t^N$ described in the introduction since it has the same dynamics.  The reflecting random walk effect is due to the fact that a pile at site $x$ loses particles at a rate proportional to the height $\rho(X_t(x))$, as if each particle is moving independently in each direction at rate $N^2$.  We now consider the drift and random boundary.  Let $S_t = \min\{x \in A_N:X_t(x)=0\}$.  When a particle moves from $S_t-1/N $ to $S_t$, which can only happen if $X_t(S_t-1/N) \geq 2$, it is killed, in the sense that $\rho(X_t)$ no longer counts it, and the boundary $S_t$ is incremented by $1/N$, as desired.  With one exception, when the process moves from $\eta$ to $\sigma(\eta)$, $\rho(\sigma(\eta_x)) = \rho(\eta_{x+1})$ except at $x=0$, where $\rho(\sigma(\eta)_{0}) = \rho(\eta_{1/N})+\rho(\eta_{0}) + 1$, representing the generated particle. The only exception is the state $X_t(1/2N) = N$, $S_t=3/2N$, in which case $\sigma(\cdot)$ has no effect, again as desired. Thus $\rho(X_t)$ with boundary $S_t$ is identical to $Y_t$ in its dynamics, and therefore in distribution, given corresponding initial distributions. Finally, note that $\Omega'_N$ is closed under the process, and can function as the state space, corresponding to the state space $\Omega_N$ of $Y_t$. 

Next, we calculate the generator applied to a linear functional.  First, we have
\begin{align*}
	\mathcal{L}_N\eta_x = \Delta_N^*\rho(\eta_x) - aD_N^*\eta_x,
\end{align*}
where $\Delta_N$ and $D_N$ are operators with $N\times N$ matrices:
\begin{align*}
\Delta_N = N^2\begin{pmatrix}
	-1 & 1 & 0 & 0 & \ldots \\
	1 & -2 & 1 & 0 & \ldots \\
	\ddots & \ddots & \ddots & \ddots & \ddots \\
	\ldots & 0 & 1 & -2 & 1 \\
	\ldots & 0 & 0 & 1 & -1
	\end{pmatrix}
\end{align*} and
\begin{equation*}
D_N = N\begin{pmatrix}
	0 & 0 & 0 & 0 & \ldots \\
	-1 & 1 & 0 & 0 & \ldots \\
	0 & -1 & 1 & 0 & \ldots \\
	\ddots & \ddots & \ddots & \ddots & \ddots \\	
	\ldots & 0 & 0 & -1 & 1
	\end{pmatrix}.
\end{equation*}
Here $A^*$ denotes the transpose of $A$. Note that $\Delta_N$ and $D_N$ represent the second symmetric and first left difference quotients, respectively, for functions $f\in C^2([0,1])$ with $f'(0) = f'(1) = 0$, in that for these functions, $D_Nf (x)$ converges to $f''(x)$ as $N$ goes to infinity, uniformly in $A_N$. 

We briefly digress to discuss the choice of our class of functions. Throughout the paper, the functions $f\in C^2([0,1])$ with $f'(0)=f'(1)=0$ will be used for test functions, as they serve several purposes. First, as noted, it is these functions for which the operators above converge uniformly to $f''$ and $f'$, respectively. Second, they are as a dense class of functions in $C([0,1])$ and can be used to define a metric on $\mathcal{M}$. Third, it is against these test functions that the weak form of the Stefan problem holds. Finally, the subfamily $\{\sqrt{2}\cos(\pi k x)\}$ is an orthonormal basis for the discrete and continuous domains, and is used to prove essential $L^2$ bounds later in the paper. 

Returning to our calculation, for $f:[0,1]\rightarrow \mathbb{R}$, let 
\beq
\<f, \eta\>_N = \frac{1}{N} \sum_{x \in A_N} f(x)\eta_x,
\eeq
 and by linearity of $\mathcal{L}_N,$
\begin{align}\label{equation:generator}
\nonumber \mathcal{L}_N\<f, \eta\>_N& = \frac{1}{N}\sum_{x \in A_N} f(x) (\Delta^*_N\rho(\eta_x) - aD_N^*\eta_x)\\
	& = \<\Delta_Nf,\rho(\eta_\cdot)\>_N - a\<D_Nf,\eta\>_N.
\end{align}

Next we prove relative compactness of $X$ in $D([0,T], \mathcal{M})$. Let
\begin{align*}
\<f,\mu\> = \int f(x)d\mu(x).
\end{align*}
We define a metric on $\mathcal{M}$, the space of positive measures on $[0,1]$, letting 
\begin{equation}
d(\nu,\mu) = \sum_{j=0}^\infty \frac{|\<f_j,\nu\>-\<f_j,\mu\>|\wedge1}{2^j},
\end{equation}
where $f_j$ are in $C^2([0,1])$ with $f'(0)=f'(1)=0$, a set which is dense in $C([0,1])$. When $\mu(dx) = u(x)dx$, we will use $\<f,u\>$ and $\<f,\mu\>$ interchangeably. Since each $f_j$ is bounded, $A\in\mathcal{M}$ is precompact if and only if $\< 1, \mu\>$ is bounded over $A$ (each $\<f_j,\mu\>$ is bounded and converges along a subsequence, which implies subsequential convergence in the metric, and $\mathcal{M}$ is complete with respect to $d$).  Note that the supports of all $Q_N$ are contained in $\mathcal{M}_1=\{\mu:\< 1, \mu\>=1\}$, a compact set. Convergence is weak convergence (convergence of expectations of continuous functions) in the space of probability measures on the Skohorod space $D([0,T],\mathcal{M})$ of right-continuous functions on $\M$.  For our purposes, convergence in this space can be convergence in $\mathcal{M}$, uniformly in $t$, since our limit points are continuous. Thus if $f(\mu)$ is continuous on $\mathcal{M}$, then $\int_0^Tf(\mu_t)dt$ and $\sup_{0\leq t\leq T}f(\mu_t)$ are continuous on $D([0,T],\mathcal{M})$. By $\{X_t\}_{t=0}^T$, we will mean the Markov process on $A_N$ with probability measures $P^N$. By $\mu_t$ we will mean the corresponding coordinate process on $\mathcal{M}_1$, with probability measure $Q^N$, so that, for example, for $A$ Borel, 
\beq
P^N\left[\<f,X_t\>_N\in A\right] = Q^N\left[\<f,\mu_t\>\in A\right].
\eeq
Relative compactness in this space follows from the following conditions, found in Chapter 2 of \cite{Kipnis99}. Let $\mathcal{T}_T$ be the space of stopping times of the usual filtration, bounded by $T$. 
\begin{lemma}
Let $Q^N$ be a sequence of probability measures on $D([0,T],\M)$. The sequence is relatively compact (in the sense of weak convergence) if:
\begin{enumerate}
\item For every $t$ in $[0,T]$ and every $\epsilon>0$, there is a compact $K(t,\epsilon) \subset \M$ such that $\sup_N Q^N [\mu_t \notin K(t,\epsilon)] \leq \epsilon$.
\item
	\beq
		\lim_{\gamma\rightarrow0} \limsup _{N\rightarrow \infty} \sup_{\tau \in \mathcal{T}_T, \theta\leq\gamma} P^N[\rho(\mu_\tau,\mu_{(\tau+\theta)\land T})\geq \epsilon ] = 0.
	\eeq
\end{enumerate}
\end{lemma}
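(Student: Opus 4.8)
The plan is to reduce the $\mathcal{M}$-valued problem to the tightness of the scalar projections $t\mapsto \langle f, \mu_t\rangle$ and then invoke the criterion of Jakubowski, in the form presented in Chapter 2 of \cite{Kipnis99}. The key structural fact is that the metric $d$ is generated by the family $\Phi = \{\mu\mapsto \langle f,\mu\rangle : f\in C^2([0,1]),\ f'(0)=f'(1)=0\}$, which consists of continuous functionals that separate points of $\mathcal{M}$ and form a linear space, hence are closed under addition. For such a family the criterion asserts that $\{Q^N\}$ is relatively compact provided (a) a compact-containment condition holds, and (b) for each $f\in\Phi$ the laws of $\langle f,\mu^N_\cdot\rangle$ are tight in $D([0,T],\mathbb{R})$. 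Condition 1 of the lemma is exactly the compact-containment hypothesis (a), using the earlier characterization that a subset of $\mathcal{M}$ is precompact iff its total mass is bounded. So the real content is establishing (b).

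For each fixed $f$ I would verify (b) by Aldous's criterion. Stochastic boundedness at every time is immediate, since on the relevant support one has $|\langle f,\mu_t\rangle|\le \|f\|_\infty \langle 1,\mu_t\rangle \le \|f\|_\infty$, so no mass escapes. For the modulus-of-continuity part I would first treat the basis functions $f_j$: from the definition of $d$, every summand is nonnegative, so $|\langle f_j,\nu\rangle-\langle f_j,\mu\rangle|\wedge 1 \le 2^j\, d(\nu,\mu)$; consequently, for $\epsilon\le 1$, the event $\{|\langle f_j,\mu_\tau\rangle-\langle f_j,\mu_{(\tau+\theta)\wedge T}\rangle|\ge\epsilon\}$ is contained in $\{d(\mu_\tau,\mu_{(\tau+\theta)\wedge T})\ge \epsilon/2^j\}$. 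Condition 2, which controls the latter probability uniformly over stopping times $\tau\le T$ and increments $\theta\le\gamma$, therefore yields at once the Aldous bound for each $\langle f_j,\mu^N_\cdot\rangle$. To pass from the distinguished $f_j$ to a general $f\in\Phi$, I would split $\langle f,\mu_\tau\rangle-\langle f,\mu_{(\tau+\theta)\wedge T}\rangle$ through a nearby $f_j$; the two error terms are each bounded by $\|f-f_j\|_\infty$ because the total mass is $1$, and density of $\{f_j\}$ in $C([0,1])$ makes these arbitrarily small, while the middle term is controlled by the previous step. This gives (b) for every functional in $\Phi$, and combining (a) and (b) through Jakubowski's criterion—together with Prokhorov's theorem, which equates relative compactness with tightness on the Polish space $D([0,T],\mathcal{M})$—delivers the conclusion.

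The one genuinely delicate point, and the reason the scalar reduction is not a triviality, is that tightness in the Skorohod topology does not factor through coordinates: coordinatewise tightness of the projections is by itself insufficient, and one must separately supply the compact-containment condition 1 and check that $\Phi$ is a point-separating, addition-closed family of continuous functionals before the Jakubowski machinery applies. Everything past that is bookkeeping; the substance of the lemma is precisely that conditions 1 and 2 are the two hypotheses this machinery consumes, with condition 2 phrased directly in the metric $d$ so that it transfers simultaneously to every scalar projection.
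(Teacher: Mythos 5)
The paper does not actually prove this lemma: it is quoted from Kipnis--Landim as Aldous's relative-compactness criterion for c\`adl\`ag processes with values in a general Polish space $E$ (here $E=\mathcal{M}$), and the proof in that reference is the direct one --- the stopping-time condition is used to control the Skorohod modulus of continuity $w'_\delta$, after which one invokes the Arzel\`a--Ascoli-type characterization of compact sets of $D([0,T],E)$ and Prohorov's theorem; no reduction to scalar projections occurs, nor could it for general $E$. Your route --- specialize to $E=\mathcal{M}$, reduce to the real-valued processes $\langle f,\mu_\cdot\rangle$, and run Aldous's one-dimensional criterion on each projection --- is therefore genuinely different, and it is viable precisely because compact subsets of $\mathcal{M}$ are the closed sets of bounded total mass, so the measure structure can do the work. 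Your treatment of condition 2 is correct: each summand of $d$ is dominated by $d$, so for $\epsilon\le 1$ the event $\{|\langle f_j,\mu_\tau\rangle-\langle f_j,\mu_{(\tau+\theta)\wedge T}\rangle|\ge\epsilon\}$ is contained in $\{d(\mu_\tau,\mu_{(\tau+\theta)\wedge T})\ge\epsilon/2^j\}$, and the stopping-time bound transfers to every $f_j$.

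There is, however, a genuine gap in how you enter the Jakubowski criterion. That criterion's hypothesis is compact containment \emph{uniformly along the path}: for each $\epsilon>0$ there must be a single compact $K\subset\mathcal{M}$ with $\sup_N Q^N[\mu_t\in K \mbox{ for all } t\le T]\ge 1-\epsilon$. Condition 1 of the lemma is only a fixed-time statement, so it is not ``exactly the compact-containment hypothesis,'' and the theorem cannot be invoked as you do. Relatedly, both your stochastic-boundedness step and your approximation step (bounding the error terms by $\|f-f_j\|_\infty$ times the total mass, evaluated at stopping times) appeal to the total mass being at most $1$ ``on the relevant support''; that is a feature of the particular processes in this paper, not a hypothesis of the lemma, which concerns arbitrary $Q^N$ on $D([0,T],\mathcal{M})$. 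Both defects are repaired by one observation you did not make: the constant function $1$ lies in your family $\Phi$, and conditions 1 and 2 give, by your own argument, Aldous tightness of the mass process $\langle 1,\mu_\cdot\rangle$ in $D([0,T],\mathbb{R})$. Since compact subsets of $D([0,T],\mathbb{R})$ are uniformly bounded, this yields $\lim_{M\to\infty}\sup_N Q^N\left[\sup_{t\le T}\langle 1,\mu_t\rangle>M\right]=0$, and since $\{\mu\in\mathcal{M}:\langle 1,\mu\rangle\le M\}$ is compact, this is exactly the uniform compact containment Jakubowski requires; it also supplies the uniform-in-time mass bound needed in your approximation and boundedness steps. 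With that insertion your argument closes, but as written the reduction rests on a hypothesis you have not verified.
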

We prove the following lemma by checking these conditions.
\begin{lemma}\label{lem:relativecompactness}
For an initial distribution such that $X_0\in \Omega_N'$ a.s., the sequence $\{Q^N\}$ is relatively compact in $D([0,T],\mathcal{M})$.
\end{lemma}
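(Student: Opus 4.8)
The plan is to verify the two hypotheses of the preceding criterion. Condition (1) is immediate: since $X_0\in\Omega'_N$ and $\Omega'_N$ is closed under the dynamics, the total mass $\<1,\mu_t\>=\frac1N\sum_{x\in A_N}X_t(x)=1$ is conserved, so every trajectory remains in the compact set $\M_1$. Taking $K(t,\epsilon)=\M_1$ then gives $\sup_N Q^N[\mu_t\notin\M_1]=0\le\epsilon$.

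The substance is condition (2). Because the metric is controlled by finitely many coordinates up to an arbitrarily small tail --- fix $J$ with $\sum_{j>J}2^{-j}<\epsilon/2$, each summand of the metric being at most $2^{-j}$ --- a union bound reduces the claim to showing that for each fixed test function $f\in C^2([0,1])$ with $f'(0)=f'(1)=0$ and each $\delta>0$,
\beq
\lim_{\gamma\to0}\ \limsup_{N\to\infty}\ \sup_{\tau\in\mathcal T_T,\ \theta\le\gamma}\ P^N\!\left[\bigl|\<f,\mu_\tau\>-\<f,\mu_{(\tau+\theta)\land T}\>\bigr|\ge\delta\right]=0.
\eeq
To estimate a single increment I use the Dynkin martingale
\beq
M_t^f=\<f,\mu_t\>-\<f,\mu_0\>-\int_0^t\mathcal L_N\<f,\mu_s\>\,ds,
\eeq
which by (\ref{equation:generator}) has drift $\mathcal L_N\<f,\mu_s\>=\<\Delta_Nf,\rho(\mu_s)\>_N-a\<D_Nf,\mu_s\>_N$, and I split the increment into the martingale increment $M^f_{(\tau+\theta)\land T}-M^f_\tau$ plus the integral of this drift. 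Since $\Delta_Nf\to f''$ and $D_Nf\to f'$ uniformly, both are bounded uniformly in $N$; as $\rho(\mu_s)$ and $\mu_s$ both have mass at most $1$, the drift is bounded by a constant $C=C(f,a)$ pathwise, so the integral term is at most $C\theta\le C\gamma$ deterministically, hence below $\delta/2$ once $\gamma$ is small.

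It remains to control the martingale increment, which I do through its predictable quadratic variation. Optional stopping gives $E\bigl[(M^f_{(\tau+\theta)\land T}-M^f_\tau)^2\bigr]=E\bigl[\<M^f\>_{(\tau+\theta)\land T}-\<M^f\>_\tau\bigr]$, where the integrand of $\<M^f\>$ is $\sum_{\text{jumps}}(\text{rate})(\Delta\<f,\cdot\>_N)^2$. A walk jump $x\to x\pm1/N$ changes $\<f,\cdot\>_N$ by $\frac1N\bigl(f(x\pm1/N)-f(x)\bigr)=O(N^{-2})$ and fires at rate $N^2\rho(\eta_x)$, so these terms sum to at most $\frac{C}{N^2}\sum_x\rho(\eta_x)\le C/N$, the conserved mass bound $\sum_x\rho(\eta_x)\le N$ absorbing the large rates. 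The shift $\eta\to\sigma(\eta)$ changes $\<f,\cdot\>_N$ by a telescoping sum of nearest-neighbor differences $f(x-1/N)-f(x)$, which is $O(N^{-1})$, and fires at rate $aN$, contributing $O(N^{-1})$. Thus the integrand is at most $C/N$ pathwise, so $E[(M^f_{(\tau+\theta)\land T}-M^f_\tau)^2]\le C\gamma/N$, and Markov's inequality bounds $P^N[|M^f_{(\tau+\theta)\land T}-M^f_\tau|\ge\delta/2]$ by $4C\gamma/(N\delta^2)\to0$. Adding the deterministic drift bound and summing the finitely many coordinates yields condition (2).

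I expect the quadratic-variation estimate to be the only real obstacle, and it is exactly where the unbounded range of the interaction appears: the walk rates $N^2\rho(\eta_x)$ are not bounded in $N$ and may reach order $N^3$. The estimate nonetheless closes because each increment is of size $O(N^{-2})$, so its square is weighted by a rate only linear in $\rho(\eta_x)$, and the conserved total mass $\sum_x\rho(\eta_x)\le N$ compensates exactly; the simultaneous drift $\sigma$ at rate $aN$ is tamed once its increment is recognized as a telescoping sum of nearest-neighbor differences. Uniformity over the stopping times $\tau$ costs nothing beyond the optional stopping theorem.
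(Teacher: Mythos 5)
Your proposal is correct and follows essentially the same route as the paper: condition (1) via conservation of mass in $\M_1$, and condition (2) by splitting $\<f,\mu_{(\tau+\theta)\land T}\>-\<f,\mu_\tau\>$ into the Dynkin martingale increment, bounded through its quadratic variation ($O(\theta/N)$, exactly the paper's bound on $B_t$) and Chebyshev, plus the drift integral bounded deterministically by $C\theta$, then reducing the metric to finitely many test functions. The only cosmetic difference is that the paper computes the quadratic variation term $B_t$ explicitly (citing the construction in \cite{Burdzy06}) rather than summing rate times squared jump over jump types, and your word ``telescoping'' for the $\sigma$-increment is inessential: that increment is simply $\frac1N\sum_z(f(z-1/N)-f(z))\eta_z$, bounded by $\|f'\|_\infty/N$ since the total mass is $N$.
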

\begin{proof}

Note that condition $(1)$ is automatically satisfied since, for all $N$, $P^N[\mu_t \notin \M_1]=0$. To check $(2)$, we determine the square variation process for the $P^N$-martingale $M_t = \<f,X_t\>_N-\int_0^t\mathcal{L}_N\<f,X_s\>_Nds$ for $f\in C^2([0,1])$ with $f'(0)=f'(1)=0$. Exactly as in the proof of Theorem 3.2 of \cite{Burdzy06}, we can show that $M_t^2 - \int_0^t B_s ds$ is a martingale, where, on $\{X_t = \eta\}$,
\begin{align}
	\nonumber B_t = &\lim_{s\rightarrow 0^+}(1/s)E^N[(\<f,X_{t+s}\>_N-\<f,X_t\>_N^2\mid X_t]\\
	\nonumber	=&\, N^2\sum_{x\in A_N} \rho(\eta_x)\left[(\<f,\eta^{x,x+1}\>_N-\<f,\eta\>_N)^2 + (\<f,\eta^{x,x-1}\>_N-\<f,\eta\>_N)^2\right]\\
	\nonumber	&+ aN(\<f,\sigma(\eta)\>_N-\<f,\eta\>_N)^2\\
		=& \, N^2\sum_{x\in A_N}\rho(\eta_x)\frac{1}{N^4}(D_Nf(x+1)^2 + D_Nf(x)^2)+ a\frac{1}{N}\<D_Nf,\eta\>_N^2.
\end{align}
Since $|D_Nf(x)|\leq\|f'\|_\infty$, and $1/N\sum_{x\in A_N} \rho(\eta_x)\leq 1/N\sum_{x\in A_N}\eta_x =1$,
\begin{equation}
|B_t| \leq \frac{1}{N}\|f'\|^2 + \frac{a}{N}\|f'\|^2.
\end{equation}
Fix $\tau \in \mathcal{T}_T$, and by $\tau+\theta$ we will mean $(\tau+\theta)\land T$, and
\beq
E^N\left[M_{\tau+\theta}^2 - \int_0^{\tau+\theta} B_s ds \mid \mathcal{F}_\tau\right] = M_\tau^2 - \int_0^\tau B_s ds,
\eeq
so
\beq 
E^N\left[M_{\tau+\theta}^2 - M_{\tau}^2\right] = E^N\left[\int_\tau^{\tau+\theta}B_sds\right] \leq \frac{C\theta}{N}.
\eeq
Now
\begin{align}\label{equation:martingalebound}
 \nonumber |\<f,X_{\tau+\theta}\> - \<f,X_\tau\>|&\leq |M_{\tau+\theta}-M_\tau | + \left|\int_\tau^{\tau+\theta}\mathcal{L}_N\<f,X_s\>ds\right|,\\
\nonumber P[|M_{\tau+\theta}-M_\tau| \geq \epsilon] &\leq \frac{E[ (M_{\tau+\theta}-M_\tau)^2]}{\epsilon^2}\\
	\nonumber &= \frac{E[M_{\tau+\theta}^2-M_\tau^2]}{\epsilon^2}\\
	&\leq \frac{C\theta}{N\epsilon^2},
\end{align}
and  $|\int_\tau^{\tau+\theta}\mathcal{L}_N\<f,X_s\>ds|\leq C\theta$, since the generator is the inner product of derivatives of $f$ with measures of bounded mass.  Thus, 
\beq
P^N[|\<f,X_{\tau+\theta}\> - \<f,X_\tau\>|\geq\epsilon]\leq C_\epsilon\theta.
\eeq
To bound the metric by a given $\epsilon$, we only need consider finitely many $f_k$ and choose $\epsilon_k$ appropriately for each of these. The bound is independent of $\tau$, so 
\begin{equation}
\sup_{\tau \in \mathcal{T}_T, \theta\leq\gamma} P^N[\rho(\mu_\tau,\mu_{\tau+\theta})\geq \epsilon ] \leq C_\epsilon\gamma
\end{equation}
and $(2)$ is satisfied. Thus $Q^N$ is relatively compact and has subsequential limits. 
\end{proof}

\section{Limit measures are $L^2$ almost surely}
In this section, we prove that subsequential limits of the measures $Q^N$ are uniformly bounded in $L^2$, depending on the $L^2$ norm of the limiting initial distribution. This allows us to apply the convergence and uniqueness results of later sections.
\begin{lemma}\label{lem:l2}
If for each $N$, $X_0^N$ under $P^N$ is a random variable with values a.s. in $\Omega'_N$ such that $\sup_NE^N[1/N\sum_{x\in A_N}X_0(x)^2]<\infty$, a subsequential limit $Q^\infty$ of the corresponding $Q^N$ on $D([0,T],\mathcal{M})$ has the property that $\mu$ is absolutely continuous with density $u \in L^2([0,T]\times [0,1])$, $Q^\infty$-a.s.
\end{lemma}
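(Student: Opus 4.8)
The plan is to bound the space–time $L^2$ norm of the empirical density by expanding in the cosine eigenbasis and running an energy estimate mode by mode. Write $e_k(x)=\sqrt2\cos(\pi k x)$ for $k\ge1$ and $e_0\equiv1$; these lie in the test–function class, are orthonormal both for $\<\cdot,\cdot\>_N$ on $A_N$ and in $L^2([0,1])$, and diagonalize $\Delta_N$ with eigenvalues $-\lambda_{k,N}$, $\lambda_{k,N}=4N^2\sin^2(\pi k/2N)$, which satisfy $4k^2\le\lambda_{k,N}\le\pi^2k^2$ for $1\le k\le N-1$; in particular $\sum_k\lambda_{k,N}^{-1}\le C$ uniformly in $N$. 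Discrete Parseval gives $\|\eta\|_N^2:=\frac1N\sum_x\eta_x^2=\sum_{k=0}^{N-1}\<e_k,\eta\>_N^2$. I will establish the uniform bound $\sup_N E^N\!\left[\int_0^T\|X_t\|_N^2\,dt\right]\le C$ depending only on $\sup_N E^N\|X_0\|_N^2$, $a$, and $T$; granting this, the lemma follows by a soft limiting argument.

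For the energy estimate I apply Dynkin's formula to $\<e_k,X_t\>_N^2$, using the carré du champ $B_t$ computed in the proof of Lemma \ref{lem:relativecompactness} together with \eqref{equation:generator}; since $\Delta_N e_k=-\lambda_{k,N}e_k$,
\[
E[\<e_k,X_t\>_N^2]=E[\<e_k,X_0\>_N^2]-2\int_0^t E\big[\lambda_{k,N}\<e_k,X_s\>_N\<e_k,Y_s\>_N+a\<e_k,X_s\>_N\<D_Ne_k,X_s\>_N\big]\,ds+\int_0^t\beta_k\,ds,
\]
where $Y_s=\rho(X_s)$ and $|\beta_k|\le C\lambda_{k,N}/N$, from the bound $|B_t|\le\frac{C}{N}\|e_k'\|_\infty^2$ of the earlier proof. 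I decompose $\<e_k,X\>_N=\<e_k,Y\>_N+r_k$ with $r_k=\<e_k,\mathbb{1}_{\mathrm{occ}}\>_N$ the coefficients of the occupied–set indicator, so that $\sum_k r_k^2\le1$ and $\sum_k r_k\<e_k,Y\>_N=\<\mathbb{1}_{\mathrm{occ}},Y\>_N\le1$ by Parseval. Dividing the identity by $\lambda_{k,N}$ and summing over $1\le k\le N-1$, the diagonal part of the leading term produces $2\int_0^t E\sum_k\<e_k,Y_s\>_N^2\,ds$, which up to the conserved mass mode is $2\int_0^t E\|Y_s\|_N^2\,ds$; the off–diagonal cross term collapses to $-2\int_0^t E\<\mathbb{1}_{\mathrm{occ}},Y_s\>_N\,ds$, bounded by $2T$; the initial term $\sum_k\lambda_{k,N}^{-1}E\<e_k,X_0\>_N^2$ and the noise term $\int_0^t\sum_k\lambda_{k,N}^{-1}\beta_k\,ds$ are both $O(1)$ precisely because $\sum_k\lambda_{k,N}^{-1}<\infty$ and $\beta_k/\lambda_{k,N}=O(1/N)$.

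The drift term $-2a\sum_k\lambda_{k,N}^{-1}\int_0^t E[\<e_k,X\>_N\<D_Ne_k,X\>_N]\,ds$ is not sign–definite, but $D_Ne_k\to e_k'=-\pi k\,\sqrt2\sin(\pi k\,\cdot)$, so an asymmetric Young split of each summand into a small multiple of the sine coefficient squared (summing to $\|X_s\|_N^2$) plus a multiple of $\lambda_{k,N}^{-1}\<e_k,X\>_N^2$ (summing to an $H_{-1}$ norm, always bounded by $C$ via the conserved mass) gives a bound $\epsilon\int_0^t E\|X_s\|_N^2\,ds+C(\epsilon,a,T)$ for every $\epsilon>0$. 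Using $\|X\|_N^2\le2\|Y\|_N^2+2$, choosing $\epsilon$ small enough to absorb the resulting multiple of $\int_0^t E\|Y_s\|_N^2\,ds$ into the leading $2\int_0^t E\|Y_s\|_N^2\,ds$, and rearranging yields $\int_0^T E\|Y_t\|_N^2\,dt\le C$ and hence $\int_0^T E\|X_t\|_N^2\,dt\le C$ uniformly in $N$.

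For the limit passage, fix $K$ and set $\Phi_K(\mu)=\sum_{k=0}^K\<e_k,\mu\>^2$, a bounded continuous functional on the compact $\M_1$, so $\mu\mapsto\int_0^T\Phi_K(\mu_t)\,dt$ is bounded and continuous on $D([0,T],\M)$. Along the subsequence of Lemma \ref{lem:relativecompactness} with limit $Q^\infty$, weak convergence and the pointwise inequality $\Phi_K(\mu_{X_t})=\sum_{k\le K}\<e_k,X_t\>_N^2\le\|X_t\|_N^2$ (valid for $N>K$) give $E^\infty[\int_0^T\Phi_K(\mu_t)\,dt]=\lim_N E^N[\int_0^T\Phi_K\,dt]\le C$, with $C$ independent of $K$. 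Letting $K\to\infty$, monotone convergence yields $E^\infty[\int_0^T\sum_{k\ge0}\<e_k,\mu_t\>^2\,dt]\le C$, so $Q^\infty$–a.s. and for a.e.\ $t$ the Fourier coefficients of $\mu_t$ are square–summable; by Riesz–Fischer and density of $\{e_k\}$, $\mu_t$ is absolutely continuous with density $u(\cdot,t)\in L^2([0,1])$ satisfying $\int_0^T\|u(\cdot,t)\|_{L^2}^2\,dt<\infty$, i.e.\ $u\in L^2([0,T]\times[0,1])$, $Q^\infty$–a.s. I expect the energy estimate to be the main obstacle: a pointwise bound on $\mathcal{L}_N\|X_t\|_N^2$ is hopeless because the carré du champ contributes a term of order $N^2$ even at smooth profiles, so the estimate must be carried out in expectation and mode by mode, using the eigenvalue division to render the high–frequency noise summable and the $H_{-1}$ splitting to tame the non–dissipative drift uniformly in $a$ and $N$.
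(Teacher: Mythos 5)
Your proposal is correct, and its skeleton is the paper's: both establish the uniform bound $\sup_N E^N[\int_0^T \frac{1}{N}\sum_x X_t(x)^2\,dt]<\infty$ by applying the generator to the $H_{-1}$-type functional $\sum_{k=1}^{N-1}\lambda_{k,N}^{-1}\<e_k,\eta\>_N^2$ built on the discrete cosine eigenbasis (the paper's $\lambda_{k,N}$ is the square root of yours, so this is exactly its $h_N$), and then transfer the bound to the limit. Where you genuinely diverge is in the two steps that occupy most of the paper's proof, and both of your replacements work. For the drift cross term, the paper computes $\sum_k \<\psi_k,\eta\>_N\<\phi_k,\eta\>_N/\lambda_{k,N}$ \emph{exactly}: a trigonometric induction gives closed forms for every $\<\phi_k,\psi_j\>_N$, an antisymmetry collapses the double sum to its $j=0$ part of size $O(\sum_k k^{-2})$, and the result is the pointwise bound $\mathcal{L}_Nh_N(\eta)+2\<\eta,\eta\>_N\le C$ with no loss of dissipation. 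You instead use Bessel's inequality for the discrete sine family plus an asymmetric Young split, absorbing $\epsilon\|X\|_N^2\le 2\epsilon\|Y\|_N^2+2\epsilon$ into $-2\|Y\|_N^2$; this is softer and much shorter, at the cost of a harmless constant, but you should record the fact that justifies ``summing to $\|X_s\|_N^2$'': the vectors $\phi_k$ restricted to $A_N$, i.e.\ $\sqrt{2}\sin(\pi k j/N)$, are orthonormal for $\<\cdot,\cdot\>_N$ --- true, a discrete sine transform identity at the same level as the paper's properties (1)--(3) of $\psi_k$. For the limit passage, the paper mollifies in space ($K_\epsilon\mu$), bounds $\int K_\epsilon\mu_\eta^2$ by $\frac1N\sum\eta_x^2$, proves continuity of the mollified functional on $D([0,T],\M)$, and sends $\epsilon\to0$ on the Fourier side of the torus by monotone convergence; you truncate in frequency instead, which makes continuity of $\Phi_K$ immediate and lets monotone convergence in $K$ finish, via Riesz--Fischer and density of cosine polynomials in $C([0,1])$. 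Your route avoids the torus projection and kernel estimates entirely; the only details to supply are the same ones the paper proves for $K_\epsilon$ --- continuity of $\mu_\cdot\mapsto\int_0^T\Phi_K(\mu_t)\,dt$ on the Skorokhod space --- plus a truncation of $\Phi_K$ (which is continuous but unbounded off $\mathcal{M}_1$) together with the fact that $Q^\infty$ is supported on mass-one paths, so that weak convergence of expectations applies.
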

\begin{proof}
We prove Lemma \ref{lem:l2} by looking at the evolution of a variant of the $H_{-1}$ norm. Let $\psi_0 = 1$ and $\psi_k = \sqrt{2}\cos(k\pi x)$, and let $\phi_0 = 0$ and $\phi_k = \sqrt{2}\sin(k\pi (x-1/2N)).$  Recall that $A_N = \{1/2N, ... , (2j+1)/2N, ... (2N-1)/2N\}$ and let $\bm{\psi_k^N}, \bm{\phi_k^N}$ represent the vectors of the respective functions evaluated on $A_N$. Then $\{\psi_k\}_{k=0}^\infty$ has the following properties:
\begin{enumerate}
\item $\Delta_N \bm{\psi_k^N} = -4N^2\sin^2(\pi k /2N)\bm{\psi_k^N}$.
\item $D_N \bm{\psi_k^N} = - 2N\sin(\pi k/2N)\bm{\phi_k^N}$.
\item $\{\bm{\psi_k^N}\}_{k=0}^{N-1}$ is an orthonormal basis for $\mathbb{R}^{A_N}$.
\item $\<\eta_1,\eta_2\>_N = \sum_0^{N-1}\<\psi_k,\eta_1\>_N\<\psi_k,\eta_2\>_N$ for $\eta_1,\eta_2\in \mathbb{N}^{A_N}$.
\item $\eta = \sum_0^{N-1}\<\psi_k,\eta\>_N\bm{\psi_k^N}$ for  $\eta\in \mathbb{N}^{A_N}$.

\end{enumerate}
The first three can be easily checked by calculations, and the last two are consequences of $(3)$. Let $\lambda_{k,N} = 2N\sin(\pi k/2N)$. Next we consider, for $\eta\in\mathbb{N}^{A_N}$,
\beq
h_N(\eta) = \sum_{k=1}^{N-1}\frac{\<\psi_k,\eta\>_N^2}{\lambda_{k,N}^2}.
\eeq
We restrict to $\eta\in\Omega'_N$ so that $\<\psi_0,\eta\>_N=1$. For $1\leq k<N,$ we have $\lambda_{k,n}>C>0$, independent of $k$ and $N$, so 
\begin{align*}
h_N(\eta)\leq \sum_{k=0}^{N-1}\<\psi_k,\eta\>^2 = \frac{1}{N}\sum_{A_N}\eta_x^2.
\end{align*}
Apply the generator
\begin{align*}
\mathcal{L}_N^1\<\psi_k,\eta\>^2 =&{} \, N^2\sum_{x \in A_N} \rho(\eta_x)\left[\<\psi_k,\eta^{x,x+1}\>_N^2-\<\psi_k,\eta\>_N^2+\<\psi_k,\eta^{x,x-1}\>_N^2-\<\psi_k,\eta\>_N^2\right]\\
		=&{} -2\lambda_{k,N}^2\<\psi_k,\rho(\eta_\cdot)\>_N\<\psi_k,\eta\>_N\\
		&+\sum_{x\in A_N}\rho(\eta_x)\left[\left(\psi_k(x+\frac{1}{N})-\psi_k(x)\right)^2+\left(\psi_k(x-\frac{1}{N})-\psi_k(x)\right)^2\right]\\
		  \leq&{} -2\lambda_{k,N}^2\<\psi_k,\rho(\eta_\cdot)\>_N\<\psi_k,\eta\>_N+\lambda_{k,N}^2/N,
\end{align*}
where the last bound follows from the inequalities $N(\psi_k(x-1/N)-\psi_k(x))\leq \lambda_{k,N}$ and $\<1,\rho(\eta_\cdot)\>_N \leq 1$. Also,
\begin{align*}
\mathcal{L}_N^2\<\psi_k,\eta\>_N^2 &= -aN(\<\psi_k,\sigma(\eta)\>_N^2-\<\psi_k,\eta\>_N^2)\\
&= a(2\<\psi_k,\eta\>_N+N^{-1}\lambda_{k,N}\<\phi_k,\eta\>_N)\lambda_{k,N}\<\phi_k,\eta\>_N\\
&\leq 2a\lambda_{k,N}\<\psi_k,\eta\>_N\<\phi_k,\eta\>_N + \lambda_{k,N}^2/N,
\end{align*}
and together we get
\begin{align*}
\mathcal{L}_Nh_N(\eta) \leq{}& -2\sum_{k=1}^{N-1}\<\psi_k,\rho(\eta_\cdot)\>_N\<\psi_k,\eta\>_N\\
	&+2a\sum_{k=1}^{N-1}\frac{\<\psi_k,\eta\>_N\<\phi_k,\eta\>_N}{\lambda_{k,N}}+C\\
	=& -2\<\rho(\eta_\cdot),\eta\>^2+C_1\sum_{k=1}^{N-1}\frac{\<\psi_k,\eta\>_N\<\phi_k,\eta\>_N}{\lambda_{k,N}}+C_2.
\end{align*}
Next, let $b_k=\<\psi_k,\eta\>_N$, and consider
\begin{align*}
	\sum_{k=1}^{N-1}\frac{\<\psi_k,\eta\>_N\<\phi_k,\eta\>_N}{\lambda_{k,N}}&=\sum_{k=1}^{N-1}\frac{b_k\<\phi_k,\sum_{j=0}^{N-1}b_j\psi_j\>_N}{\lambda_{k,N}}\\
		&= \sum_{k=1}^{N-1}\sum_{j=0}^{N-1}\frac{b_kb_j\<\phi_k,\psi_j\>_N}{\lambda_{k,N}}.
\end{align*}
We claim that
\begin{multline}
	\sum_{i=0}^{M-1}\sin\left(\frac{ \pi ki}{N}\right)\cos\left(\frac{\pi j(2i+1)}{2N}\right)=\\
	\frac{1}{4}\csc\left(\frac{\pi(j+k)}{2N}\right)\left(\cos\left(\frac{2\pi j + \pi k}{2N}\right)-\cos\left(\frac{2\pi j M + 2\pi k M - \pi k}{2N}\right)\right)\\
	+\frac{1}{4}\csc\left(\frac{\pi(j-k)}{2N}\right)\left(\cos\left(\frac{2\pi j M - 2\pi k M + \pi k}{2N}\right)-\cos\left(\frac{2\pi j -\pi k}{2N}\right)\right),
\end{multline}
and we prove by induction in $M$.  Since the constant terms of the right hand side are the variable terms evaluated at $M=1$, we see that we can check the difference $S(M+1)-S(M)$ to obtain a telescoping sum on the right hand side.  In other words, we require
\begin{align*}
	&{}\sin\left(\frac{ \pi kM}{N}\right)\cos\left(\frac{\pi j(2M+1)}{2N}\right)=\frac{1}{4}\csc\left(\frac{\pi(j+k)}{2N}\right)\\
	  &\times\left(\cos\left(\frac{2\pi jM + 2\pi kM-\pi k}{2N}\right)-\cos\left(\frac{2\pi j (M+1) + 2\pi k (M+1) - \pi k}{2N}\right)\right)\\
	&+\frac{1}{4}\csc\left(\frac{\pi(j-k)}{2N}\right)\\
	&\times\left(\cos\left(\frac{2\pi j (M+1) - 2\pi k (M+1) + \pi k}{2N}\right)-\cos\left(\frac{2\pi jM -2\pi kM + \pi k}{2N}\right)\right).
\end{align*}

Recalling that $\cos(A+B)-\cos(A) = -2\sin(B/2)\sin(A+B/2)$, first with $A= (2\pi jM + 2\pi kM-\pi k)/2N$ and $B = \pi (j+k)/N$, the first term on the right hand side becomes
\begin{equation*}
	\frac{1}{2}\sin\left(\frac{2\pi jM + 2\pi kM+\pi j }{2N}\right),
\eeq
and with $A =  (2\pi jM - 2\pi kM+\pi k)/2N$ and $B = \pi (j-k)/N$, the second term becomes
\beq
	-\frac{1}{2}\sin\left(\frac{2\pi jM - 2\pi kM + \pi j }{2N}\right).
\eeq
Finally, the difference formula for $\sin$ is $\sin(A+B)-\sin(A) =2\sin(B/2)\cos(A+B/2)$, and substituting $A = (2\pi jM - 2\pi kM + \pi j)/2N$ and $B = 2\pi kM/ N$, we get the desired formula.

Therefore, for $j-k$ even, we get
\begin{align*}
	\<\phi_k,\psi_j\>_N ={}&\frac{1}{4N}\csc\left(\frac{\pi(j+k)}{2N}\right)\left(\cos\left(\frac{2\pi j + \pi k}{2N}\right)-\cos\left(\frac{- \pi k}{2N}\right)\right)\\
	&+\frac{1}{4N}\csc\left(\frac{\pi(j-k)}{2N}\right)\left(\cos\left(\frac{\pi k}{2N}\right)-\cos\left(\frac{2\pi j -\pi k}{2N}\right)\right)\\
	={}&\frac{1}{4N}\csc\left(\frac{\pi(j+k)}{2N}\right)\left(\cos\left(\frac{\pi k}{2N} + \frac{\pi j}{N}\right)-\cos\left(\frac{\pi k}{2N}\right)\right)\\
	&+\frac{1}{4N}\csc\left(\frac{\pi(j-k)}{2N}\right)\left(\cos\left(\frac{-\pi k}{2N}\right)-\cos\left( \frac{-\pi k}{2N}+\frac{\pi j}{N}\right)\right),
\end{align*}
and using the identity $\cos(A+B)-\cos(A) = -2\sin(A+B/2)\sin(B/2)$, we get
\begin{align*}
	\<\phi_k,\psi_j\>_N ={} -\frac{1}{2N}\left(\sin\left(\frac{\pi j}{2N}\right)+\sin\left(\frac{-\pi j}{2N}\right) \right) = 0.
\end{align*}
For $j-k$ odd, we get
\begin{align*}
	\<\phi_k,\psi_j\>_N ={}&\frac{1}{4N}\csc\left(\frac{\pi(j+k)}{2N}\right)\left(\cos\left(\frac{2\pi j + \pi k}{2N}\right)+\cos\left(\frac{- \pi k}{2N}\right)\right)\\
	&+\frac{1}{4N}\csc\left(\frac{\pi(j-k)}{2N}\right)\left(-\cos\left(\frac{\pi k}{2N}\right)-\cos\left(\frac{2\pi j -\pi k}{2N}\right)\right),
\end{align*}
and this time, with $\cos(A+B)+\cos(A) = 2\cos(A+B/2)\cos(B/2)$,
\begin{align*}
	\<\phi_k,\psi_j\>_N={}&\frac{1}{2N}\cos\left(\frac{\pi j}{2N}\right)\left(\cot\left(\frac{\pi(j+k)}{2N}\right)-\cot\left(\frac{\pi(j-k)}{2N}\right)\right).
\end{align*}
Then substitute 
\begin{align*}
\cot(A+B)-\cot(A) = -2\csc(A)\sin(B/2)\cos(B/2)\csc(A+B),
\end{align*}
 giving
\begin{align*}
	\frac{\<\phi_k,\psi_j\>_N}{2N\sin\left(\frac{\pi k}{2N}\right)} = -\frac{\cos\left(\frac{\pi j}{2N}\right)\cos\left(\frac{\pi k}{2N}\right)}{2N^2\sin\left(\frac{\pi (j-k)}{2N}\right)\sin\left(\frac{\pi (j+k)}{2N}\right)}.
\end{align*}
We can conclude that 
\beq
	\frac{b_k b_j\<\phi_k,\psi_j\>_N}{2N\sin\left(\frac{\pi k}{2N}\right)} = -\frac{b_j b_k\<\phi_j,\psi_k\>_N}{2N\sin\left(\frac{\pi j}{2N}\right)} ,
\eeq
and, after canceling pairs for $j>0$, and recalling that $b_0=1$, the double sum reduces to
\begin{align*}
	 \sum_{k=1}^{N-1}\sum_{j=0}^{N-1}\frac{b_k b_j\<\phi_k,\psi_j\>_N}{\lambda_{k,N}} &= \sum_{k=1}^{N-1}\frac{b_k\<\phi_k,\psi_0\>_N}{\lambda_{k,N}}\\
	 	&=\sum_{k=1}^{N-1}\frac{b_k \sigma(k)\cos\left(\frac{\pi k}{2N}\right)}{2N^2\sin^2\left(\frac{\pi k}{2N}\right)},
\end{align*}
where $\sigma(k) = k \mod 2$. We have $2N^2\sin^2\left(\frac{\pi k}{2N}\right)\geq Ck^2$ for $1 \leq k \leq N-1$, so
\begin{align*}
	\sum_{k=1}^{N-1}\sum_{j=0}^{N-1}\frac{b_k b_j\<\phi_k,\psi_j\>_N}{\lambda_{k,N}} \leq C\sum_{k=1}^\infty \frac{1}{k^2}\leq C.
\end{align*}
Finally, noting that $\<\eta,\eta\>_N \leq \<\rho(\eta_\cdot),\eta\>_N+1$, we obtain an estimate for the generator applied to $h_N$, 
\begin{align*}
	\mathcal{L}_Nh_N(\eta)+2\<\eta,\eta\>_N \leq C.
\end{align*}
The process $h_N(X_t)-\int_0^t\mathcal{L}_Nh_N(X_s)ds$ is a martingale, so
\begin{align*}
	E^N\left(h_N(X_T)+2\int_0^T\<X_t,X_t\>_Ndt\right) \leq E^N(h_N(X_0)) + CT, \label{equation:7}
\end{align*}
and since $h_N\geq0$, and $h_N(X_0)\leq\<X_0,X_0\>_N$, 
\begin{align*}
E^N\left[\int_0^T\<X_t,X_t\>_Ndt\right] \leq E^N\left[\<X_0,X_0\>_N\right] + CT,
\end{align*}
where $C$ does not depend on $N$. The proposition below finishes the proof of the lemma.
\end{proof}
\begin{proposition}
	Let $Q^N \rightarrow Q^\infty$ be a weakly convergent sequence of probability measures on $D([0,T],\mathcal{M})$ representing the empirical measures of Markov processes $(X_t,P^N)$ on $\mathbb{N}^{A_N}$. Suppose that
	\beq
		\sup_NE^N\left[\int_0^T\frac{1}{N}\sum_{x\in A_N}X_t(x)^2dt\right] < \infty.
	\eeq
Then $\mu(dx,t) $ is absolutely continuous with density $u \in L^2([0,1]\times[0,T])$ $Q^\infty$-a.s.
\end{proposition}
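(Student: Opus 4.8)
The plan is to characterize absolute continuity with an $L^2$ density of the space-time measure $\mu(dx,dt)=\mu_t(dx)\,dt$ by a duality formula, and to transport the hypothesized uniform bound on $\langle X_t,X_t\rangle_N=\frac1N\sum_{x\in A_N}X_t(x)^2$ to the limit. I would fix a smooth orthonormal basis of $L^2([0,1]\times[0,T])$ given by tensor products $h_k=\psi_i\,\theta_l$, where $\{\psi_i\}$ is the spatial cosine basis already used in the paper and $\{\theta_l\}$ is an orthonormal cosine basis of $L^2([0,T])$; its span is dense in $C([0,1]\times[0,T])$ by Stone--Weierstrass. Writing $c_k=\int_0^T\langle h_k(\cdot,t),\mu_t\rangle\,dt$, a continuous functional of the path, it suffices to prove $E^{Q^\infty}\big[\sum_{k\ge1}c_k^2\big]<\infty$: this forces $\sum_k c_k^2<\infty$ $Q^\infty$-a.s., so $g\mapsto\langle g,\mu\rangle$ is bounded in the $L^2$ norm on the dense span of the $h_k$, hence represented by some $u=\sum_k c_k h_k\in L^2$, and density of the span in $C$ then identifies $\mu$ with $u\,dx\,dt$.

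The engine is the pointwise inequality $2\,g(x,t)X_t(x)\le g(x,t)^2+X_t(x)^2$, which after summing over $A_N$, dividing by $N$, and integrating in $t$ gives, for \emph{every} function $g$ (deterministic or random),
\[
2\int_0^T\langle g,\mu_t\rangle\,dt\;\le\;\int_0^T\frac1N\sum_{x\in A_N}g(x,t)^2\,dt\;+\;\int_0^T\langle X_t,X_t\rangle_N\,dt .
\]
For a fixed truncation level $m$ I would apply this, under $Q^N$, to the \emph{random} optimizer $G=\sum_{k\le m}c_k^N h_k$, where $c_k^N=\int_0^T\langle h_k,\mu_t\rangle\,dt$ is the same functional evaluated on the discrete process. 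Using property (3) of the $\psi_i$, for $N$ larger than the finitely many spatial modes involved the discrete inner products $\frac1N\sum_x\psi_i\psi_{i'}$ equal $\delta_{ii'}$ exactly, while $\int_0^T\theta_l\theta_{l'}\,dt=\delta_{ll'}$; hence $\int_0^T\langle G,\mu_t\rangle\,dt=\sum_{k\le m}(c_k^N)^2$ and $\int_0^T\frac1N\sum_x G^2\,dt=\sum_{k\le m}(c_k^N)^2$ with no discretization error. The inequality then collapses to the clean pathwise bound $\sum_{k\le m}(c_k^N)^2\le\int_0^T\langle X_t,X_t\rangle_N\,dt$, and taking $E^{Q^N}$ and invoking the hypothesis yields $E^{Q^N}\big[\sum_{k\le m}(c_k^N)^2\big]\le C$ for all large $N$, with $C$ independent of $m$ and $N$.

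Finally I would pass to the limit. For fixed $m$ and a level $R$, the functional $\big(\sum_{k\le m}c_k^2\big)\wedge R$ is bounded and continuous on $D([0,T],\mathcal{M})$ (using the remark that $\int_0^T f(\mu_t)\,dt$ is continuous when $f$ is, here with the fixed bounded time weight $\theta_l$), so weak convergence gives $E^{Q^\infty}\big[(\sum_{k\le m}c_k^2)\wedge R\big]=\lim_N E^{Q^N}\big[(\sum_{k\le m}(c_k^N)^2)\wedge R\big]\le C$; letting $R\to\infty$ and then $m\to\infty$ by monotone convergence produces $E^{Q^\infty}\big[\sum_{k\ge1}c_k^2\big]\le C$, which is what was needed. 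The step I expect to be the main obstacle is exactly this transfer to the limit: the $L^2$ conclusion requires controlling $E[\sup_g(\cdots)]$, whereas a naive fixed-$g$ estimate only bounds $\sup_g E[\cdots]$, and these differ. The device that removes the difficulty is the random optimizer $G$, which replaces the troublesome supremum by the identity $\sum_{k\le m}(c_k^N)^2$, to which expectation, weak convergence, and monotone convergence apply in turn. The only remaining technical care is that exact discrete orthonormality (property (3)) removes the error term so the discrete identity is clean, and that each truncated functional $(\sum_{k\le m}c_k^2)\wedge R$ is bounded and continuous, so that weak convergence is legitimate before the monotone limits are taken.
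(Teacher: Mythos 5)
Your proof is correct, and it takes a genuinely different route from the paper's. The paper proceeds by mollification: it convolves the empirical measure with the kernel $K_\epsilon$, shows by a direct computation that $\int_0^1 (K_\epsilon\mu_\eta)^2\,dx \leq (1+o(1))\frac{1}{N}\sum_x \eta_x^2$, proves that $\mu_\cdot \mapsto \int_0^T\int_0^1 (K_\epsilon\mu)^2\,dx\,dt$ is a bounded continuous functional on $D([0,T],\mathcal{M})$ so that the uniform bound passes to $Q^\infty$, and only then invokes Fourier analysis (complex exponentials on the torus, with the $\mathrm{sinc}$ multiplier identity $\langle f_k,K_\epsilon\mu\rangle = \frac{\sin(2\pi k\epsilon)}{2\pi k\epsilon}\langle f_k,\mu\rangle$ and monotone convergence as $\epsilon \to 0$) to extract an $L^2$ density. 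You skip mollification entirely: your ``random optimizer'' inequality is exactly Bessel's inequality for the function $(x,t)\mapsto X_t(x)$ in the tensor basis $\{\psi_i\theta_l\}$, which is legitimate pathwise precisely because of the exact discrete orthonormality of the midpoint-sampled cosines (property (3), which the paper establishes for its $H_{-1}$ estimate but does not reuse here), and you then transfer the truncated coefficient bounds through weak convergence and identify the density by Parseval, Stone--Weierstrass, and Riesz representation. Both arguments solve the same core difficulty --- that the naive estimate controls $\sup_g E[\cdots]$ rather than $E[\sup_g(\cdots)]$ --- by packaging the supremum into a single bounded continuous functional on path space: the paper's is the mollified $L^2$ norm, yours is the truncated sum $\bigl(\sum_{k\le m}c_k^2\bigr)\wedge R$. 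What the paper's mollification buys is robustness (it needs no exact lattice orthogonality, only the crude counting estimate); what your route buys is the elimination of the $\epsilon$-error bookkeeping and of the slightly delicate monotone-convergence-in-$\epsilon$ step, at the price of leaning on the special structure of the cosine basis on $A_N$. Two small points to make explicit if you write this up: the continuity on $D([0,T],\mathcal{M})$ of $c_k = \int_0^T\langle h_k(\cdot,t),\mu_t\rangle\,dt$ for a time-dependent integrand follows since Skorokhod convergence gives $\mu^n_t \to \mu_t$ at all but countably many $t$, and bounded convergence applies because all measures lie in $\mathcal{M}_1$; and your identification of the limit measure needs the observation that sup-norm approximation by span elements simultaneously controls both pairings (against $\mu_t\,dt$, a finite measure, and against $u\,dx\,dt$, since $u \in L^2 \subset L^1$), which is immediate but worth saying.
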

\begin{proof}
We consider the mollification $K_\epsilon\mu$ defined by 
\beq
K_\epsilon\mu(x,t) = \frac{1}{2\epsilon}\int_{x-\epsilon}^{x+\epsilon}\mu^*(dx,t),
\eeq
 where $\mu^*$ is the projection of $\mu$ onto the torus $\mathbb{T}$ created by identifying $0$ and $1$. We will prove that $\mu^*$ has $L^2$ density, which implies that $\mu$ does. Henceforth we use the notation $\mu$ for simplicity. For given $N$, let $\epsilon = \epsilon_0 + m/N$, where $0\leq\epsilon_0<1/N$.  Then let 
 \beq
 \tilde{K}_\epsilon\mu(x) = \frac{1}{2\epsilon}\int\chi_{(-m/N+\epsilon_0,m/N+\epsilon_0]}d\mu(y). 
 \eeq
 If $\mu$ is the empirical measure for $\eta \in \mathbb{N}^A_N$, then we can calculate $\int_0^1\tilde{K}_\epsilon\mu(x)^2dx$.
 \begin{align*}
	\tilde{K}_\epsilon\mu(x) = \frac{1}{2\epsilon N}\sum_{x\in A_N}\chi_{[x-\epsilon_0,x-\epsilon_0+\frac{1}{N})}\sum_{k=1}^{2m}\eta_{x+\frac{k-m}{N}}.
\end{align*}
So we calculate
\begin{align*}
	\int_0^1\tilde{K}_\epsilon\mu(x)^2dx &= \frac{1}{4\epsilon^2 N^3}\sum_{x\in A_N}\left(\sum_{k=1}^{2m}\eta_{x+\frac{k-m}{N}}\right)^2\\
	&=\frac{1}{4\epsilon^2 N}\left[2m\sum_{x\in A_N}\eta_x^2+\sum_{k=1}^{2m-1}2(2m-k)\sum_{x\in A_N}\eta_x\eta_{x+\frac{k}{N}}\right]\\
	&\leq \frac{1}{4\epsilon^2 N^3}\left(2m+2\sum_{k=1}^{2m-1}k\right)\sum_{x\in A_N}\eta_x^2\\
	&= \frac{m^2}{\epsilon^2N^2}\left(\frac{1}{N}\sum_{x\in A_N}\eta_x^2\right),
\end{align*}
which gives us
\begin{equation*}
	\int_0^1 K_\epsilon\mu(x)^2dx\leq \int_0^1 \tilde{K}_\epsilon\mu(x)^2dx \leq (1+o(1))\frac{1}{N}\sum_{x\in A_N}\eta_x^2.
\eeq
If we can prove that the function $\mu(dx,t) \mapsto  \int_0^T\int_0^1 K_\epsilon\mu(x, t)^2dxdt$ is continuous on $D([0,T],\mathcal{M})$, then $E^N(\int_0^1 K_\epsilon\mu(x)^2dx)$ converges to $E^\infty(\int_0^1 K_\epsilon\mu(x)^2dx)$ for each $N$, and by the above inequality, this quantity is bounded by $\linebreak \sup_N E^N\left[\int_0^T\frac{1}{N}\sum_{x\in A_N}X_t(x)^2dt\right] $, which is finite by hypothesis. Therefore the following two lemmas complete the proof of the proposition.
\end{proof}
\begin{lemma}\label{lemma:ac}
If $\limsup_{\epsilon\rightarrow 0}E^\infty(\int_0^T\int_0^1 K_\epsilon\mu(x)^2dxdt)<\infty$, then with probability one, $\mu$ is absolutely continuous, with $\mu(dx,t) = u(x,t)dx$ and $\linebreak E^\infty(\int_0^T\int_0^1u^2(x)dxdt)<\infty$. 
\end{lemma}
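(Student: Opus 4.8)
The plan is to combine weak-$L^2$ compactness with the elementary fact that the mollifications $K_\epsilon\mu$ converge weakly to $\mu$ as measures, transferring the uniform $L^2$ bound from expectation to the individual paths via Fatou's lemma. Throughout I work on the torus and argue path-by-path for $Q^\infty$-almost every $\mu$.

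First I would pass from the hypothesis on $E^\infty$ to an almost-sure statement. Choose a sequence $\epsilon_n\to 0$ along which $E^\infty\big(\int_0^T\int_0^1 K_{\epsilon_n}\mu(x,t)^2\,dx\,dt\big)$ converges to the finite $\limsup$. Since the integrands are nonnegative, Fatou's lemma gives
\[
E^\infty\Big(\liminf_n \int_0^T\!\!\int_0^1 K_{\epsilon_n}\mu^2\,dx\,dt\Big)\le \liminf_n E^\infty\Big(\int_0^T\!\!\int_0^1 K_{\epsilon_n}\mu^2\,dx\,dt\Big)<\infty,
\]
so $Q^\infty$-a.s. the quantity $\liminf_n\int_0^T\int_0^1 K_{\epsilon_n}\mu^2$ is finite. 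Fix such a path. Along a ($\mu$-dependent) subsequence the functions $K_{\epsilon_{n_k}}\mu$ are bounded in $L^2([0,1]\times[0,T])$, and since bounded sets in a Hilbert space are weakly sequentially precompact, after passing to a further subsequence $K_{\epsilon_{n_k}}\mu\rightharpoonup u$ weakly in $L^2$ for some $u\in L^2([0,1]\times[0,T])$.

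The crux is to identify this weak limit $u$ with a density for $\mu$. For this I would test against an arbitrary continuous $g$ on $[0,1]\times[0,T]$, which lies simultaneously in $C$ and in $L^2$. On one hand, writing $K_\epsilon\mu$ out and using Fubini together with the uniform continuity of $g$ and dominated convergence shows $\int_0^T\int_0^1 g\,K_\epsilon\mu\,dx\,dt\to \int_0^T\int_0^1 g(x,t)\,\mu(dx,t)\,dt$ for the \emph{full} family $\epsilon\to0$. On the other hand, weak $L^2$ convergence along the subsequence gives $\int g\,K_{\epsilon_{n_k}}\mu\to\int g\,u$. Equating the two limits yields $\int_0^T\int_0^1 g(x,t)\,\mu(dx,t)\,dt=\int_0^T\int_0^1 g\,u\,dx\,dt$ for every continuous $g$; since continuous functions are measure-determining, $\mu(dx,t)\,dt=u(x,t)\,dx\,dt$ on the product space, and disintegrating in $t$ shows $\mu(dx,t)=u(x,t)\,dx$ for almost every $t$. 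Thus $\mu$ is absolutely continuous with density $u\in L^2$.

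Finally, the $L^2$ bound follows from weak lower semicontinuity of the norm: path-by-path, $\int_0^T\int_0^1 u^2\le \liminf_k\int K_{\epsilon_{n_k}}\mu^2\le \liminf_n\int K_{\epsilon_n}\mu^2$. The density is $Q^\infty$-a.s. unique up to null sets, so $\int u^2$ is a well-defined measurable functional of $\mu$, and taking expectations and invoking the almost-sure finiteness of $\liminf_n\int K_{\epsilon_n}\mu^2$ established in the first step gives $E^\infty\big(\int_0^T\int_0^1 u^2\big)<\infty$. I expect the main obstacle to be the identification step: reconciling the $\mu$-dependent subsequence produced by weak compactness with the full-family measure convergence of $K_\epsilon\mu$. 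The point that makes it work is that a single class of test functions — the continuous functions — is rich enough both to determine the limiting measure and to realize the weak $L^2$ limit, so the two notions of limit can be equated on that class. Alternatively, one can avoid choosing subsequences entirely by using the variational functional $\mu\mapsto\sup_g\{2\int g\,d\mu-\int g^2\,dx\}$, which is lower semicontinuous under weak convergence and equals $\int u^2$ exactly when $\mu$ has $L^2$ density, but the compactness argument is more transparent here.
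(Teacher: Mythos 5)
Your proposal is correct, but it follows a genuinely different route from the paper. The paper's proof is explicit harmonic analysis on the torus: it takes $f_k(x)=e^{2\pi i kx}$, computes the multiplier identity $\langle f_k,K_\epsilon\mu\rangle=\frac{\sin(2\pi k\epsilon)}{2\pi k\epsilon}\langle f_k,\mu\rangle$, uses the resulting monotone convergence of $|\langle f_k,K_\epsilon\mu\rangle|^2$ to upgrade the hypothesis to $Q^\infty$-a.s.\ finiteness of $\int_0^T\sum_k|\langle f_k,\mu_t\rangle|^2\,dt$, constructs the density outright as the Fourier series $u(\cdot,t)=\sum_k\langle f_k,\mu_t\rangle f_k$, and identifies $\mu(dx,t)=u\,dx$ via the fact that $\{f_k\}$ is an algebra dense in $C(\mathbb{T})$. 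You instead argue softly: Fatou's lemma to localize the bound to individual paths, weak sequential compactness of bounded sets in $L^2$ to extract a limit $u$, identification of $u$ by equating the subsequential weak-$L^2$ limit with the full-family weak-$*$ convergence $K_\epsilon\mu\to\mu$ of measures against continuous test functions, and weak lower semicontinuity of the norm for the final expectation bound. Both arguments are complete, and each buys something: the paper's computation is elementary, fits the cosine-basis toolkit used for the $H_{-1}$ estimate and the uniqueness lemma, gives convergence of the whole family $\epsilon\to0$ (not just a subsequence), and exhibits the density's Fourier coefficients as exactly those of $\mu$; your argument is more robust and general, since it uses nothing about $K_\epsilon$ beyond its being an approximate identity, so it would transfer verbatim to other mollifiers or other compact state spaces, and you correctly defuse the one delicate point---the path-dependence of the weakly convergent subsequence---by observing that the measure identification determines $u$ uniquely regardless of which subsequence produced it (which also settles measurability of $\mu\mapsto\int_0^T\int_0^1 u^2\,dx\,dt$, as does your variational-functional remark).
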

\begin{lemma}
The function $\mu(dx,t) \mapsto  \int_0^T\int_0^1 K_\epsilon\mu(x, t)^2dxdt$ is continuous on $D([0,T],\mathcal{M})$.
\end{lemma}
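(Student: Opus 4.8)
The plan is to reduce the claim to the weak continuity, on $\mathcal{M}$ itself, of the single-time functional $f(\mu) := \int_0^1 K_\epsilon\mu(x)^2\,dx$, and then to invoke the fact recorded earlier in this section that whenever $f$ is continuous on $\mathcal{M}$ the map $\mu_\cdot\mapsto\int_0^T f(\mu_t)\,dt$ is continuous on $D([0,T],\mathcal{M})$ (using that for the paths at issue Skorohod convergence is uniform in $t$). So everything reduces to showing that $\mu\mapsto f(\mu)$ is continuous in the metric $d$, equivalently weakly continuous, on the mass-one set $\mathcal{M}_1$ where all the relevant measures live.

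The key step is to rewrite $f$ so that the discontinuous box kernels disappear. Since $K_\epsilon\mu(x) = \frac{1}{2\epsilon}\int_{\mathbb{T}} \mathbb{1}_{\{\|x-y\|_{\mathbb{T}}\le\epsilon\}}\,\mu^*(dy)$, expanding the square and applying Fubini gives, for $\epsilon$ small,
\begin{equation*}
f(\mu) = \int_{\mathbb{T}}\int_{\mathbb{T}} \Phi_\epsilon(y,z)\,\mu^*(dy)\,\mu^*(dz), \qquad \Phi_\epsilon(y,z) = \frac{1}{4\epsilon^2}\bigl(2\epsilon - \|y-z\|_{\mathbb{T}}\bigr)_+,
\end{equation*}
where the inner $x$-integral produces exactly the length $\bigl(2\epsilon-\|y-z\|_{\mathbb{T}}\bigr)_+$ of the overlap of the two $\epsilon$-balls centered at $y$ and $z$. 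The point is that although $x\mapsto \langle\mathbb{1}_{\{\|x-\cdot\|_{\mathbb{T}}\le\epsilon\}},\mu^*\rangle$ is a pairing against a discontinuous indicator, integrating in $x$ smooths these indicators into the continuous, bounded, symmetric tent kernel $\Phi_\epsilon$ on $\mathbb{T}\times\mathbb{T}$.

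With $\Phi_\epsilon$ continuous and bounded, weak continuity of $f$ follows from standard facts. First, the torus projection $\mu\mapsto\mu^*$ is weakly continuous, since for $g\in C(\mathbb{T})$ the pullback $g\circ q$ of $g$ under the quotient map $q$ lies in $C([0,1])$. Second, if $\mu_n\to\mu$ weakly with uniformly bounded mass, then $\mu_n^*\otimes\mu_n^*\to\mu^*\otimes\mu^*$ weakly on $\mathbb{T}\times\mathbb{T}$: this holds on the functions $g(y)h(z)$ because $\bigl(\int g\,d\mu_n^*\bigr)\bigl(\int h\,d\mu_n^*\bigr)$ converges, it extends by Stone--Weierstrass to all of $C(\mathbb{T}\times\mathbb{T})$, and the uniformly bounded product masses control the extension. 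Integrating the continuous bounded kernel $\Phi_\epsilon$ then yields $f(\mu_n)\to f(\mu)$, giving continuity on $\mathcal{M}_1$ and hence the lemma.

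I expect the only real obstacle to be the first of these observations. The naive attempt to treat $K_\epsilon\mu(x)$ as a continuous functional of $\mu$ fails, because it pairs $\mu$ against a discontinuous indicator whose jump set can carry mass in the limit. The remedy --- performing the $x$-integration before passing to the limit, so that the indicators are replaced by the continuous overlap kernel $\Phi_\epsilon$ --- is precisely what makes the functional weakly continuous; the remaining product-measure and Stone--Weierstrass steps, and the verification that the overlap integral equals $\bigl(2\epsilon-\|y-z\|_{\mathbb{T}}\bigr)_+$, are routine.
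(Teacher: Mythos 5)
Your proof is correct, but it takes a genuinely different route from the paper for the key step. The paper handles the single-time functional $f(\mu)=\int_0^1 K_\epsilon\mu(x)^2dx$ by exactly the ``naive'' pointwise approach you dismiss: since $K_\epsilon\mu(x)$ pairs $\mu$ against an interval indicator, weak convergence $\mu_n\to\nu$ gives $K_\epsilon\mu_n(x)\to K_\epsilon\nu(x)$ whenever $x\pm\epsilon$ are continuity points (non-atoms) of $\nu$, hence for almost every $x$; the uniform bound $K_\epsilon\mu(x)^2\le\epsilon^{-2}$ on $\mathcal{M}_1$ then lets dominated convergence finish, first in $x$ and then in $t$. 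So the pointwise route does not fail --- it merely requires the a.e.\ (countably many atoms) plus dominated-convergence patch, which is the paper's whole argument and is shorter than yours. Your alternative --- integrating in $x$ first so the square becomes the bilinear pairing $\int\int\Phi_\epsilon(y,z)\,\mu^*(dy)\,\mu^*(dz)$ with the continuous tent kernel $\Phi_\epsilon(y,z)=\frac{1}{4\epsilon^2}\bigl(2\epsilon-\|y-z\|_{\mathbb{T}}\bigr)_+$, followed by weak convergence of product measures via Stone--Weierstrass --- is sound (the overlap computation is right for $\epsilon$ small, and the mass-one bound controls the product-measure limit), and it buys something: it exhibits $f$ as an honestly weakly continuous functional with no exceptional sets, no appeal to atoms, and no dominated convergence in $x$, which makes the mechanism of continuity more transparent and the statement more robust on sets of bounded mass. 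Both proofs rely on the same recorded fact that continuity of $f$ on $\mathcal{M}$ transfers to continuity of $\mu_\cdot\mapsto\int_0^T f(\mu_t)\,dt$ on $D([0,T],\mathcal{M})$, so the difference is confined to the single-time step.
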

\begin{proof}
First we prove Lemma \ref{lemma:ac}. Let $f_k(x) = e^{2\pi i kx}$, and $\{f_k\}_{k\in\mathbb{Z}}$ is an orthonormal basis for $L^2([0,1])$, with each $f_k$ continuous on $\mathbb{T}$.  For each measurable function $g:[0,1]\rightarrow\mathbb{R}$, $\int_0^T\int_0^1g(x)^2dxdt = \int_0^T\sum_\mathbb{Z}|\<f_k,g\>|^2dt$. We have $\<f_0, K_\epsilon\mu\> = 1$ for all $\mu \in \mathcal{M}_1$, and for $k\neq0$,
\begin{align*}
\<f_k,K_\epsilon\mu\> &= \int_0^1e^{2\pi ik x}\frac{1}{2\epsilon}\int_{x-\epsilon}^{x+\epsilon}\mu(dy)dx\\
	&= \frac{1}{2\epsilon}\int_0^1\int_{y-\epsilon}^{y+\epsilon}e^{2\pi i k x}dx\mu(dy)\\
	&= \frac{1}{4\pi k i\epsilon}\int_0^1 (e^{2\pi k i\epsilon} - e^{-2\pi k i \epsilon})e^{2\pi k i y}\mu(dy)\\
	&= \frac{\sin(2\pi k \epsilon)}{2 \pi k \epsilon}\<f_k,\mu\>.
\end{align*}
We conclude that as $\epsilon$ goes to zero, $|\<f_k,K_\epsilon\mu\>|$ increases to $|\<f_k,\mu\>|$. Thus by the monotone convergence theorem,
\beq
\lim_{\epsilon\rightarrow0}\int_0^T\sum_{k=-\infty}^\infty|\<f_k,K_\epsilon\mu(t)\>|^2dt = \int_0^T\sum_{k=-\infty}^\infty|\<f_k,\mu(t)\>|^2dt,
\eeq
for all $\mu \in D([0,T],\mathcal{M})$. Let $N = \{\mu \in D([0,T],\mathcal{M}_1):  \int_0^T\sum_{k=\infty}^\infty|\<f_k,\mu\>|^2dt=\infty\}$. Clearly $Q^\infty(N) = 0$ by elementary measure-theoretical considerations. Thus, $ \int_0^T\sum_{k=\infty}^\infty|\<f_k,\mu(t)\>|^2dt<\infty $ $Q^\infty$-a.e. For such $\mu$, the sequence $\<f_k,\mu(t)\>$ is in $l^2$ for almost all $t$, so there is a function $u(x,t) = \sum_{k=-\infty}^\infty\<f_k,\mu(t)\>f_k$ such that $\int_0^T\|u\|_2^2dt<\infty$. To see that $\mu(dx,t) = u(x,t)dx$ as a measure for each $t$ such that $u$ is finite, note that the collection $\{f_k\}$ forms an algebra which is dense in $C(\mathbb{T})$. For each function $g$ in the algebra, $\<g, \mu(t)\> = \<g, u(t)\>$ and the measures must be the same. Now $\mu(dx,t)=u(x,t)dx$ for almost all $t$ and therefore they are equal as functions in $L^2([0,T]\times [0,1])$, as desired.

To prove the continuity lemma, note that as $\mu \rightarrow \nu$ in $\mathcal{M}_1$, $K_\epsilon\mu(x) \rightarrow K_\epsilon\nu(x)$ when the endpoints are points of continuity of $\nu$, that is, almost everywhere. Since $K_\epsilon\mu(x)^2$ is bounded by $\epsilon^{-2}$ for $\mu \in \mathcal{M}_1$, we can apply dominated convergence to see that $\int_0^1 K_\epsilon\mu(x)^2dx \rightarrow  \int_0^1 K_\epsilon\nu(x)^2dx$.  Then if $\mu_\cdot \rightarrow \nu_\cdot$ in $D([0,T],\mathcal{M})$, the time integrals converge, as desired.
\end{proof}

\section{The weak form of the hydrodynamic equation}
In this section we prove that the points of the limiting measure satisfy a weak version of the hydrodynamic equation \ref{eq:stefan1}. First we identify the equation.
\begin{proposition}\label{prop:equivalence}
Let $v$ be a solution to equations \ref{eq:stefan1}-\ref{eq:stefan2}. Then the function $u: [0,1]\times[0,T]\rightarrow \mathbb{R}$ defined by
\beq
u(x,t) = \mathbb{1}_{\{(x,t):x<s(t)\}}(v(x,t)+1)
\eeq
satisfies the integral equation
\begin{multline}\label{eq:weakstefan}
\int_0^1f(x)u(x,t)dx\> - \int_0^1f(x)u(x,0)dx \\
	 =\int_0^t\int_0^1f''(x)\rho(u(x,r))dxdr -a\int_0^t\int_0^1f'(x)u(x,r)dxdr
\end{multline}
for each $t\leq T$.
\end{proposition}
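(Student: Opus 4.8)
The plan is to verify \ref{eq:weakstefan} by observing that both sides vanish at $t=0$ and then matching their $t$-derivatives; since the right-hand side is $\int_0^t(\cdots)\,dr$, it suffices to show that $\frac{d}{dt}\int_0^1 f(x)u(x,t)\,dx$ equals the inner integral on the right. First I would record the explicit form of $u$: by definition $u(x,t)=v(x,t)+1$ for $x<s(t)$ and $u(x,t)=0$ for $x\ge s(t)$. Since $v\ge 0$ (it is the temperature above freezing; for the one-phase problem this follows from the maximum principle applied to \ref{eq:stefan1} with nonnegative initial data), we get $\rho(u(x,t))=(u-1)\lor 0 = v(x,t)\,\mathbb{1}_{\{x<s(t)\}}$. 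Hence the right-hand integrand is $\int_0^{s(t)}f''(x)v\,dx - a\int_0^{s(t)}f'(x)(v+1)\,dx$, where the final $-a\int_0^{s(t)}f'(x)\,dx$ integrates explicitly to $-a(f(s(t))-f(0))$.

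Next I would differentiate $\int_0^1 fu\,dx=\int_0^{s(t)}f(x)(v(x,t)+1)\,dx$ in $t$. The regularity $s\in C^1([0,T])$, $v\in C^2(D_T)\cap C^1(\overline{D_T})$ makes Leibniz's rule legitimate, and the moving-boundary term is $f(s(t))(v(s(t),t)+1)s'(t)=f(s(t))s'(t)$, using $v(s(t),t)=0$. For the bulk term $\int_0^{s(t)}f\,\partial_t v\,dx$ I would substitute the PDE \ref{eq:stefan1}, $\partial_t v=\partial_{xx}v+a\partial_x v$, then integrate by parts twice in the $\partial_{xx}$ term and once in the $a\partial_x$ term. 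This produces the interior integrals $\int_0^{s(t)}f''v\,dx-a\int_0^{s(t)}f'v\,dx$, which already match the right-hand side, plus boundary contributions at $x=0$ and $x=s(t)$. The terms $f'(s(t))v(s(t),t)$ and $f'(0)v(0,t)$ drop out immediately, the first by $v(s(t),t)=0$ and the second by the test-function condition $f'(0)=0$ (note $f'(1)=0$ is not needed, since the boundary at $s(t)\le 1$ is handled through $v(s(t),t)=0$).

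The crux of the proof, and really its only content beyond bookkeeping, is that the three remaining boundary survivors collapse to exactly $af(0)-af(s(t))$, precisely the discrepancy needed to match the first-step computation. At $x=0$ the survivors are $-f(0)\partial_x v(0,t)-af(0)v(0,t)=-f(0)(\partial_x v+av)|_{x=0}$, which equals $af(0)$ by the Robin condition $(\partial_x v+av)|_{x=0}=-a$. At $x=s(t)$ the survivors $f(s(t))s'(t)+f(s(t))\partial_x v(s(t),t)$ collapse to $-af(s(t))$ after inserting the differential Stefan condition $s'(t)=-a-\partial_x v|_{s(t)}$, the form of \ref{eq:stefan2} noted in the text. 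The fact that the Robin condition, the vanishing condition $v(s(t),t)=0$, and the Stefan flux condition together supply exactly the right boundary data is where the problem's structure is used, so I would present that cancellation carefully rather than treating it as routine.

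Assembling the pieces, $\frac{d}{dt}\int_0^1 fu\,dx=af(0)-af(s(t))+\int_0^{s(t)}f''v\,dx-a\int_0^{s(t)}f'v\,dx$, which is exactly the right-hand integrand from the first step. Since both sides of \ref{eq:weakstefan} vanish at $t=0$ and the derivative is continuous in $t$ by the assumed regularity, integrating from $0$ to $t$ via the fundamental theorem of calculus yields the identity for every $t\le T$, completing the proof.
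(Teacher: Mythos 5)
Your calculation is correct and follows the same route as the paper's own proof: write the left side as $\int_0^{s(t)}f(x)(v(x,t)+1)\,dx$, differentiate in $t$ by Leibniz's rule, substitute \ref{eq:stefan1}, integrate by parts, and check that the Robin condition, the condition $v(s(t),t)=0$, and the differential Stefan condition $s'(t)=-a-\partial_x v(s(t),t)$ make the boundary terms collapse to $af(0)-af(s(t))$, which is exactly the term $-a\int_0^{s(t)}f'(x)\,dx$ produced on the right-hand side. That part of your write-up would stand next to the paper's almost line for line.

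The genuine gap is the parenthetical claim that $v\ge 0$ ``follows from the maximum principle applied to \ref{eq:stefan1} with nonnegative initial data.'' You need $v\ge 0$ to write $\rho(u)=v\,\mathbb{1}_{\{x<s(t)\}}$ --- without it one only has $\rho(u)=(v\lor 0)\,\mathbb{1}_{\{x<s(t)\}}$ and the identity fails --- and this is precisely the step the paper does \emph{not} treat as routine: it occupies the second half of the published proof. The obstruction is that the parabolic boundary of $D_T$ has a piece, $\{x=0\}$, on which no sign information about $v$ is given: the data there is the Robin condition $(\partial_x v+av)|_{x=0}=-a$. The minimum principle only pushes a hypothetical negative minimum to a point $(0,t_0)$; to exclude it you would invoke the Hopf boundary-point lemma, which gives $\partial_x v(0,t_0)>0$, while the Robin condition gives $\partial_x v(0,t_0)=-a\bigl(1+v(0,t_0)\bigr)$. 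These contradict each other only when $v(0,t_0)>-1$; if the minimum were $\le -1$ there is no contradiction, so the naive appeal to the maximum principle does not close the argument. The paper's fix is an extra idea: set $y=e^{ax}$ and $w(e^{ax},t)=ae^{ax}v(x,t)$, so that $w$ satisfies the drift-free equation $w_t=a^2y^2w_{yy}$ and the Robin condition becomes the signed Neumann condition $w_y(1,t)=-a<0$; then Theorem 2.1 of \cite{Friedman64} forces a negative minimum of $w$ onto the boundary piece $y=1$, and Theorem 2.14 of \cite{Friedman64} yields $w_y(1,t_0)\ge 0$, a contradiction no matter how negative the minimum is. To complete your proof you must either reproduce this transformation argument or supply a careful substitute (for instance a first-crossing-time argument keeping the putative minimum in $(-1,0)$, combined with the Hopf lemma); the one-line citation of the maximum principle is not enough.
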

\begin{proof}
This is a calculation by integration by parts. By hypothesis, conditions \ref{eq:stefan1}-\ref{eq:stefan2} hold, and $f'(0) = 0$. Define $v=0$ for $x>s(t)$.
\begin{align*}
	\int_0^{s(t)}f(x)v(x,t)&dx-\int_0^{s(0)}f(x)v(x,0)dx\\
		=&{} \int_0^t\partial_r\int_{0}^{s(r)}f(x)v(x,r)dxdr \\
		=&{}\int_0^t s'(r)f(s(r))v(s(r),r)+\int_0^t\int_0^{s(r)}f(x)\partial_rv(x,r))dxdr\\
		=&{}\int_0^t\int_0^{s(r)}f(x)(v_{xx}(x,r) + av_x(x,r))dxdr\\
		=&{}\int_0^tf(x)(v_x +av)|_{x=0}^{s(r)} - a\int_0^{s(r)}f'(x)v(x,r)dxdr\\
		 &- \int_0^tf'(x)v(x,r)|_{x=0}^{s(r)} + \int_0^{s(r)}f''(x)v(x.r)dxdr\\
\end{align*}
\begin{align*}
		=&{}\int_0^tf(s(r))(-a-s'(r))+af(0) \\
		&+ \int_0^{s(t)}(f''(x) - af'(x))v(x,r)dxdt\\
		=&{}\int_0^t\int_0^{s(t)}-af'(x)dx-d_r\int_0^s(r)f(x)dx\\
		&+\int_0^{s(t)}(f''(x) - af'(x))v(x,r)dxdt\\
		=&{}-\int_0^{s(t)}f(x)dx + \int_0^{s(0)}f(x)dx \\
		&+ \int_0^t\int_0^1f''(x)v(x,t)dx - \int_0^1f'(x)(v(x,t)+1)\mathbb{1}_{x<s(t)}dxdt.
\end{align*}
If it can be shown that $v\geq0$ in $D_T$ for $v_0\geq0$, following our intuition for the heat equation, then the proposition is proven, since $\rho(u)=v$. To see that this holds, consider the space transformation $T(x) = e^{ax}$, and define a function $w$ in $T(D_T)$ by
\begin{align*}
w(T(x),t) = T'(x)v(x,t).
\end{align*}
Then $w$ satisfies $w_t = a^2y^2w_{yy}$ in $T(D_T)$, and by Theorem 2.1 in \cite{Friedman64}, if $w$ has a negative minimum, it occurs on the boundary, which must be at $(1,t_0)$ for some $t_0>0$ because of the other boundary conditions. Then by Theorem 2.14 in the same book, $v_y(1,t_0) \geq 0$. However, the boundary conditions require $v_y(1,t_0) = -a$, a contradiction. Therefore $w\geq0$ on $T(D_T)$, and $v\geq 0$ on $(D_T)$.
\end{proof}

Next we turn to convergence. Our goal is the following lemma, leaving only uniqueness of such solutions to prove Theorem $(0.1)$: 
\begin{lemma}\label{lem:weaksolution}
If, for each $N$, $X^N_0$ under $P^N$ are as in Lemma \ref{lem:l2}, and the corresponding $Q^N$ converge to the delta measure on an absolutely continuous measure with density $u_0 \in L^2$, then under $Q^\infty$, for $0<t\leq T,$ $\mu_t(dx)$ is absolutely continuous a.s.\ with density $u(x,t)$ such that, for $f\in C^2([0,1])$ such that $f'(0)=f'(1)=0,$
\beq
\<f,u(\cdot,t)\>-\<f,u(\cdot,0)\> = \int_0^t\<f'',\rho(u(\cdot,s))\>ds -a\int_0^t\<f',u(\cdot,s)\>ds.
\eeq
\end{lemma}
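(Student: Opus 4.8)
The plan is to start from Dynkin's martingale and pass to the limit along the subsequence realizing $Q^\infty$, treating the linear terms by continuity and the nonlinear term via the geometry of $\Omega_N'$. Fix $f\in C^2([0,1])$ with $f'(0)=f'(1)=0$. By the construction of $X_t$ and (\ref{equation:generator}), the process
\beq
M_t^f = \<f,X_t\>_N - \<f,X_0\>_N - \int_0^t\left(\<\Delta_Nf,\rho(X_s)\>_N - a\<D_Nf,X_s\>_N\right)ds
\eeq
is a $P^N$-martingale. The quadratic variation already computed in Lemma \ref{lem:relativecompactness} gives $E^N[(M_t^f)^2] = E^N[\int_0^t B_s\,ds]\leq Ct/N$, so $M_t^f\to 0$ in $L^2(P^N)$, hence in probability. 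It therefore suffices to identify the limits of the three remaining terms.

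For the linear terms I would use that $\Delta_Nf\to f''$ and $D_Nf\to f'$ uniformly on $A_N$ while each $\mu_s$ carries mass one, so replacing $\Delta_Nf,D_Nf$ by $f'',f'$ costs an error vanishing uniformly in $s$. The functionals $\mu_\cdot\mapsto\<f,\mu_t\>$, $\mu_\cdot\mapsto\<f,\mu_0\>$ and $\mu_\cdot\mapsto\int_0^t\<f',\mu_s\>ds$ are continuous on $D([0,T],\M)$ at $Q^\infty$-a.e.\ path (limit paths being continuous, every fixed time is a continuity point), so along the subsequence they converge to $\<f,u(\cdot,t)\>$, $\<f,u_0\>$ and $\int_0^t\<f',u(\cdot,s)\>ds$, where Lemma \ref{lem:l2} gives $\mu_s=u(\cdot,s)dx$ with $u\in L^2$; the initial term converges by the hypothesis on $Q^N$.

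The crux is the nonlinear term $\<\Delta_Nf,\rho(X_s)\>_N=\frac1N\sum_x\Delta_Nf(x)\rho(\eta_x)$, which depends on the microscopic occupation numbers $\eta_x=N\mu_s(\{x\})$ through the measure-discontinuous map $\rho$ and so cannot be passed to the limit by continuity alone. Here I exploit the structure of $\Omega_N'$: the occupied sites form a contiguous block $\{x\in A_N: x<S_s\}$ on which $\eta_x\geq1$, so that $\rho(\eta_x)=\eta_x-\mathbb{1}_{\{\eta_x\geq1\}}$ exactly and
\beq
\sum_{x\in A_N}\frac{\rho(\eta_x)}{N}\delta_x = \mu_s^N - \lambda_s^N, \qquad \lambda_s^N=\frac1N\sum_{x<S_s}\delta_x,
\eeq
where $\lambda_s^N$ is the midpoint discretization of Lebesgue measure on $[0,S_s)$. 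Its subsequential limit is Lebesgue measure on $[0,B_s)$ with $B_s=\lim S_s$, and $\mu_s^N\geq\lambda_s^N$ forces $u(\cdot,s)\geq1$ on its support $[0,B_s)$; hence $\min(u,1)=\mathbb{1}_{[0,B_s)}$ is precisely the density of that limit, giving $\<\Delta_Nf,\rho(X_s)\>_N\to\<f'',u(\cdot,s)\>-\<f'',\min(u(\cdot,s),1)\>=\<f'',\rho(u(\cdot,s))\>$. Inserting the four limits into the martingale identity and using the uniform bound $|\<f'',\lambda_s^N\>|\leq\|f''\|_\infty$ to pass the expectation through the time integral by dominated convergence yields the weak equation $Q^\infty$-a.s.

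The main obstacle is exactly this nonlinear term, and the decisive input is the elementary identity $\rho(\eta_x)=\eta_x-\mathbb{1}_{\{\eta_x\geq1\}}$ combined with the contiguous-block geometry of $\Omega_N'$, which replaces the usual one-block and two-block replacement machinery by a direct identification of $\lambda_s^N$. The delicate technical points I expect to fight with are the convergence of the free boundary $S_s\to B_s$ and the survival of $\lambda_s^N\to\mathbb{1}_{[0,B_s)}dx$ through the time integral; both should follow from conservation of total mass, the $L^2$ bound of Lemma \ref{lem:l2}, and dominated convergence.
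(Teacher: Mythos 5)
Your route is genuinely different from the paper's at the one place where the proof is hard, and its skeleton is defensible. The exact identity $\rho(\eta_x)=\eta_x-\mathbb{1}_{\{x<S_s\}}$ on $\Omega_N'$, i.e.\ $\mu_{\rho(X_s)}=\mu^N_s-\lambda^N_s$, is the same structural fact the paper exploits, and your martingale and linear-term steps coincide with the paper's. But the paper is engineered so that the limit of the block term is \emph{never} identified: it introduces the mollification $M_\epsilon\mu$ and shows that $\langle f'',\rho(M_\epsilon\mu_\eta)\rangle$ and $\langle f'',\mu_{\rho(\eta_\cdot)}\rangle$ differ by $O(1/\epsilon N)+O(\epsilon)+o_N(1)$ uniformly, precisely because the term $\int_0^{R_t}f''$ appears in both and cancels; the time integral of the mollified term is a genuinely continuous functional on $D([0,T],\mathcal{M})$, so weak convergence of $Q^N$ transfers the identity to $Q^\infty$, and the mollification is then removed $Q^\infty$-a.s.\ using the $L^2$ bound. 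In your version, by contrast, the convergence of the free boundary $S_s$ is load-bearing.

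This is where the two genuine gaps sit. First, you argue pathwise (``its subsequential limit is Lebesgue measure on $[0,B_s)$'', ``$\mu^N_s\geq\lambda^N_s$ forces $u\geq1$'') while you only have weak convergence of the laws $Q^N$. Neither $S_s$, nor $\lambda^N_s$, nor $\mu_{\rho(X_s)}$ is a continuous functional of the empirical measure --- this discontinuity is exactly why the paper mollifies --- so the continuous mapping theorem does not apply to them; to argue as you do you must first invoke the Skorokhod representation theorem to realize the $Q^N$ on a common probability space with a.s.\ convergence in $D([0,T],\mathcal{M})$ (legitimate, since the space is separable, and since limit paths are continuous one then gets $\mu^N_s\to\mu_s$ in $\mathcal{M}$ for every $s$). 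Second, and more seriously, writing $B_s=\lim S_s$ presupposes that $S_s$ converges, which is essentially the free-boundary convergence that the whole problem turns on, and the tools you cite for it (mass conservation, the $L^2$ bound, dominated convergence) do not deliver it. It can be supplied, but by the block geometry itself: if along two subsequences $S^N_s\to b_1<b_2$, then the first forces the limit density to vanish a.e.\ on $(b_1,1]$ (supports), while $\mu^N\geq\lambda^N$ along the second forces it to be $\geq1$ a.e.\ on $[0,b_2)$, a contradiction on $(b_1,b_2)$; hence all subsequential limits of the bounded sequence $S^N_s$ agree. With Skorokhod representation, this uniqueness argument, and dominated convergence in time, your proof closes; as written, both steps are asserted rather than proved.
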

\begin{proof}
The proof is by elementary estimates. We have already shown that for $f\in C([0,1])$, the $P^N$-martingale $M_t = \<f,X_t\>_N-\int_0^t\mathcal{L}_N\<f,X_s\>_Nds$ satisfies
\beq
P^N\left[|M_t-M_0|>\epsilon\right]<\frac{Ct}{N\epsilon^2},
\eeq
and
\beq
\mathcal{L}_N\<f,X_s\>_N = \<\Delta_Nf,\rho(X_s)\>_N -a\<D_Nf,X_s\>_N,
\eeq
at (\ref{equation:martingalebound}) and (\ref{equation:generator}), respectively. For $f\in C^2([0,1])$ such that $f'(0)=f'(1)=0$, $\Delta_Nf\rightarrow f''$ and $D_Nf\rightarrow f'$ uniformly, so
\beq
\left|\int_0^t\<\Delta_Nf,\rho(X_s)\>_N -a\<D_Nf,X_s\>_Nds - \int_0^t\<f'', \rho(X_s)\>_N-a\<f',X_s\>_Nds\right|
\eeq
goes to zero uniformly in $D([0,T],\mathcal{M}_1)$. Now $-a\<f',X_s\>_N=-a\<f',\mu_s\>$ in distribution, under $P^N$ and $Q^N$ respectively, the right hand side being a continuous functional on $\mathcal{M}_1$. However, the term with $\rho$ is not one, so we must make some additional estimates. For $\mu \in \mathcal{M}$, let 
\beq
M_\epsilon\mu(x) = \frac{1}{2\epsilon}\int\mathbb{1}_{|x-y|\leq\epsilon}d\mu(y).
\eeq
Because of the nature of the proof, this time we extend to the real line rather than the torus when integrating over $y$ beyond the boundary points. In addition, extend $f$ beyond $[0,1]$ so that $f''$ is continuous and bounded, and interpret $\<f'',\rho(M_\epsilon\mu_\eta)\>$ to mean the integral over the whole real line (in this case, just $[-\epsilon, 1+\epsilon]$ because of the support of $M_\epsilon\mu$).  For the process $X$, recall that $S_t = \min\{x\in A_N: X_t = 0\}$, defined to be $(2N+1)/2N$ for the state where all sites have one particle. Under $P^N$, $X_t \in \Omega_N$ for all $t$ a.s., so for $x < S_t$,  $X_t(x) \geq 1$. Let $R_t = S_t-1/N$, the last site with a particle. We claim that for $0<t\leq T$, given $\delta>0$, for $\epsilon$ small enough, as $N$ goes to infinity, 
\beq
Q^N\left[\left|\<f,\mu_t\>-\<f.\mu_0\> - \left(\int_0^t\<f'',\rho(M_\epsilon\mu_s)\>ds -a\int_0^t\<f',\mu_s\>ds\right)\right|>\delta\right] \rightarrow 0.
\eeq
Because of the martingale bound and uniform convergence above, it suffices to show that for small enough $\epsilon$,
\beq
|\<f'',\rho(M_\epsilon\mu_\eta)\>-\<f'',\mu_{\rho(\eta_\cdot)}\>|\label{equation:6}
\eeq
vanishes as $N\rightarrow \infty$. Define
\beq
h(x) = M_\epsilon\mu_\eta(x)-\rho(M_\epsilon\mu_\eta(x)),
\eeq
and note that $0\leq h(x)\leq 1$ and $h(x) = 0$ for $x>R_t+\epsilon$. When $R_t>2\epsilon$, for $x \in [\epsilon,R_t-\epsilon)$, $\eta_y \geq 1$ for $y \in A_N$ such that $|y-x|\leq\epsilon$. So for such $x$, 
\begin{align}\label{equation:hsmall}
\nonumber M_\epsilon\mu_\eta(x) =&{} \frac{1}{2\epsilon}\int\mathbb{1}_{|x-y|\leq\epsilon}d\mu_\eta(y)\\
 \nonumber =&{} \frac{1}{2\epsilon N}\sum_{y\in A_N\cap\{x:|x-y|\leq\epsilon\}}\eta_y\\
 \nonumber \geq &{} \frac{|A_N\cap\{x:|x-y|\leq\epsilon\}|}{2\epsilon N}\\
 \geq &{} \frac{2\epsilon N - 1}{2\epsilon N},
\end{align}
and on this interval, $e(x) \mathrel{\mathop:}= 1-h(x) \leq 1/(2\epsilon N)$.  We will use the decompositions
\begin{align*}
\int f''(x)\rho(M_\epsilon\mu_\eta(x))dx =&{} \int f''(x)M_\epsilon\mu_\eta(x)dx- \int_0^{R_t}f''(x)dx\\ 
	&+ \int_{\epsilon}^{R_t-\epsilon}f''(x)e(x)dx +I_\epsilon,
\end{align*}
where $I_\epsilon$ represents the left over bits of the integrals of bounded functions. Also, we have
\beq
\int f''(x)d\mu_{\rho(\eta_\cdot)}(x) = \int f''(x)d\mu_\eta (x) - \frac{1}{N}\sum_{x\in A_N, x\leq R_t} f''(x).
\eeq
Let $M$ be such that $|f''|<M$, then by (\ref{equation:hsmall}), between $\epsilon$ and $R_t-\epsilon$, $e(t)\leq 1/2\epsilon N$. Therefore we have
\begin{align*}
 \left|\int_{\epsilon}^{R_t-\epsilon}f''(x)e(x)dx\right|\leq\frac{M}{2\epsilon N}.
\end{align*}	
As a Riemann sum, 
\begin{align*}
\left|\int_0^{R_t}f''(x)dx-\frac{1}{N}\sum_{x\in A_N, x\leq R_t} f''(x)\right|\leq C_N,
\end{align*}
for some $C_N$ going to zero, and $|I_\epsilon|\leq 4M\epsilon$. Finally, in the space of distributions, $M_\epsilon\mu \rightarrow \mu$, which gives us convergence of $\<f'',M_\epsilon\mu\>$ to $\<f'',\mu\>$ uniformly on $\mathcal{M}_1$, so we bound the difference by $C_\epsilon\rightarrow 0$. Combining estimates, we get
\begin{align*}
|\<f'',\rho(M_\epsilon\mu_\eta)\>-\<f'',\mu_{\rho(\eta_\cdot)}\>|\leq\frac{M}{2\epsilon N}+ 4M\epsilon + C_N + C_\epsilon.
\end{align*}
Given $\delta>0$, for all $\epsilon$ small enough, as $N$ goes to infinity, the right hand side is less than $\delta/t$, and after integrating over $t$ and taking expectations, the claim is proved. 

Next, $\int_0^t\<f'',\rho(M_\epsilon\mu_s)\>ds$ is a continuous functional on $D([0,T]\mathcal{M})$. The proof is the same as for $K_\epsilon$ in Lemma 2.3. Therefore, 
\begin{align*}
Q^\infty\left[\left|\<f,\mu_t\> - \<f,\mu_0\> - \left(\int_0^t\<f'',\rho(M_\epsilon\mu_s)\>ds -a\int_0^t\<f',\mu_s\>ds\right)\right|>\delta\right]=0,
\end{align*}
for small $\epsilon$. Finally, $\mu \in L^2([0,1]\times [0,T])$ $Q^\infty$-a.s., and for such $\mu$, $M_\epsilon\mu \rightarrow \mu$ in $L^2$, so
\begin{align*}
\left|\int_0^t\<f'',\rho(M_\epsilon\mu_s)\>ds-\int_0^t\<f'',\rho(\mu_s)\>ds\right|\rightarrow 0
\end{align*}
a.s. Convergence is uniform on $\mathcal{M}_1\cap L^2$, so 
\begin{align*}
\left|\<f,\mu_t\> - \<f,\mu_0\> - \left(\int_0^t\<f'',\rho(\mu_s)\>ds -a\int_0^t\<f',\mu_s\>ds\right)\right|
\end{align*}
is less than $\delta$ a.s.\ for all $\delta>0$, and the lemma is proved.
\end{proof}

\section{Uniqueness of weak solutions}
In the previous section, we found that a subsequential limit $Q^\infty$ is concentrated on solutions to the equation (\ref{eq:weakstefan}).
We need a uniqueness theorem for such $u$.
\begin{lemma}\label{lem:weakuniqueness}
Given functions $u_0\in L^2([0,1])$, $a\in C([0,T]),$ a function $u \in L^2([0,1] \times [0,T])$ that satisfies the integral equation~(\ref{eq:weakstefan}) for all $f \in C^2([0,1])$ with $f'(0)=f'(1)=0$ is unique. 
\end{lemma}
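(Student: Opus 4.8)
The plan is to run an $H_{-1}$ energy estimate against the Neumann cosine basis, exploiting the monotonicity of $\rho$ and the conservation form of the drift. Suppose $u_1,u_2\in L^2([0,1]\times[0,T])$ both satisfy~(\ref{eq:weakstefan}) with the same initial datum $u_0$, and set $w=u_1-u_2$ and $R=\rho(u_1)-\rho(u_2)$. Taking $f\equiv1=\psi_0$ (admissible since $\psi_0'\equiv0$) in the difference of the two integral equations shows $\int_0^1 w(x,t)\,dx=0$ for every $t$, so $w(\cdot,t)$ is mean-zero and its antiderivative $W(x,t)=\int_0^x w(y,t)\,dy$ satisfies $W(0,t)=W(1,t)=0$.

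First I would introduce the functional $\Phi(t)=\|W(\cdot,t)\|_{L^2}^2$. Writing $\psi_k=\sqrt2\cos(k\pi x)$ and $s_k=\sqrt2\sin(k\pi x)$, both orthonormal bases of $L^2([0,1])$, and setting $b_k(t)=\<\psi_k,w(\cdot,t)\>$, $e_k(t)=\<s_k,w(\cdot,t)\>$, Parseval gives $\Phi(t)=\sum_{k\geq1}b_k(t)^2/(k\pi)^2$, since the sine coefficients of $W$ are exactly $b_k/(k\pi)$. Each $\psi_k$ is an admissible test function ($\psi_k'(0)=\psi_k'(1)=0$), with $\psi_k''=-(k\pi)^2\psi_k$ and $\psi_k'=-(k\pi)s_k$. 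Substituting $f=\psi_k$ into the difference of the two weak equations yields, for each $k$, the absolutely continuous identity
\[
b_k(t)=-(k\pi)^2\int_0^t\<\psi_k,R\>\,dr-\int_0^t a(r)\,\<\psi_k',w\>\,dr,
\]
so that $b_k'(t)=-(k\pi)^2\<\psi_k,R\>+a(t)(k\pi)e_k(t)$ for a.e.\ $t$.

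The core computation differentiates $b_k^2/(k\pi)^2$, sums over $k$ up to $M$, and passes $M\to\infty$. The diffusive part telescopes, via Parseval, to $-2\int_0^t\<w,R\>\,dr$; since $\rho$ is nondecreasing and $1$-Lipschitz, the pointwise bound $wR=(u_1-u_2)(\rho(u_1)-\rho(u_2))\geq0$ holds, so this term is $\leq0$. The drift part sums to $2\int_0^t a(r)\sum_k\tfrac{b_ke_k}{k\pi}\,dr=2\int_0^t a(r)\<W(\cdot,r),w(\cdot,r)\>\,dr$, and here the key cancellation occurs: $\<W,w\>=\int_0^1 WW'\,dx=\tfrac12\bigl(W(1)^2-W(0)^2\bigr)=0$ because $w$ is mean-zero. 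Hence the drift disappears entirely---it is the conservation form of $-a\partial_x u$ together with the mean-zero condition that renders the transport invisible to the $H_{-1}$ norm, which is why a merely continuous time-dependent $a$ causes no difficulty. We are left with $\Phi(t)=-2\int_0^t\<w,R\>\,dr\leq0$, and since $\Phi\geq0$ with $\Phi(0)=0$ (equal initial data), we conclude $\Phi\equiv0$, hence $W\equiv0$, $w\equiv0$, and $u_1=u_2$.

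The main obstacle is purely technical: justifying the term-by-term differentiation and the interchange of the $k$-summation with the time integral and the $M\to\infty$ limit. I would handle this by working with the finite partial sums $\Phi_M(t)=\sum_{k=1}^M b_k^2/(k\pi)^2$, for which the manipulations are legitimate because each $b_k$ is absolutely continuous, and then passing to the limit using Parseval together with the hypothesis $u_1,u_2\in L^2([0,1]\times[0,T])$ (which places $w,R$ in $L^2$, guarantees the relevant series converge, and permits dominated convergence in $r$). The only genuinely delicate points are verifying that $b_k$ is absolutely continuous with the stated derivative and that $\<W,w\>$ vanishes for a.e.\ $t$; both follow from the integral form of~(\ref{eq:weakstefan}) and the $L^2$ regularity, so no additional smoothness of $u$ is needed. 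The orthonormality and Parseval identities for $\{\psi_k\}$ and $\{s_k\}$ are exactly the elementary harmonic analysis that makes the argument go through.
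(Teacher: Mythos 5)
Your proof is correct, and at the skeleton level it follows the paper: the same $H_{-1}$ functional (your $\Phi(t)=\sum_k b_k(t)^2/(k\pi)^2$ is the paper's $R(t)=\sum_k b_k(t)^2/k^2$ up to the factor $\pi^2$), the same use of $f=\psi_0$ to force mean-zero, the same Parseval/Cauchy--Schwarz/dominated-convergence bookkeeping for the passage $M\to\infty$, and the same endgame $\<\ol{u}_t,\ol{\rho}_t\>\geq 0$ from monotonicity of $\rho$. Where you genuinely diverge is the step that carries most of the weight in the paper: killing the drift term. The paper works entirely on the Fourier side: it expands $\ol{u}_t$ in the cosine basis, computes the pairing $\<\phi_k,\psi_j\>=2k\sigma(k,j)/(\pi(k^2-j^2))$ explicitly, proves absolute convergence of the double series $\sum_{k,j}b_kb_j\sigma(k,j)/(k^2-j^2)$ via $(k^2-j^2)^2\geq k^2(k-j)^2$, and only then concludes that the series vanishes by antisymmetry under $k\leftrightarrow j$. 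You instead identify the drift sum $\sum_k b_k e_k/(k\pi)$ as $\<W,w\>$ for the antiderivative $W(x,t)=\int_0^x w(y,t)\,dy$ (using that the sine coefficients of $W$ are $b_k/(k\pi)$), and kill it in physical space by the fundamental theorem of calculus: $\<W,w\>=\int_0^1 WW'\,dx=\tfrac12\left(W(1)^2-W(0)^2\right)=0$, since mean-zero gives $W(1)=0$. This is the same cancellation, but your route needs only single-series Cauchy--Schwarz and no explicit trigonometric computation, so it is shorter and more transparent; your technical caveats (absolute continuity of each $b_k$, domination by $\|w(\cdot,r)\|_{L^2}^2$, the $1$-Lipschitz bound $\|R\|_{L^2}\leq\|w\|_{L^2}$) are all the right ones and close the argument without gaps. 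What the paper's formulation buys in exchange is parallelism with the discrete estimate: in Lemma \ref{lem:l2} the analogous double sum appears for the lattice vectors $\bm{\psi_k^N},\bm{\phi_k^N}$, where no continuum antiderivative argument is available and where the sum does not vanish (there $b_0=1$ rather than $0$) but must be bounded; the uniqueness proof is written as the continuum shadow of that computation, whereas your argument exploits structure special to the continuum limit.
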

\begin{proof}
The proof is based on the method in A2.4 of \cite{Kipnis99}, using techniques analagous to those used for the $L^2$ bound \ref{lem:l2}. Suppose $u$ and $u'$ are two solutions for a given $u_0$, and let $\ol{u}_t(x) = u(x,t)-u'(x,t)$ and $\ol{\rho}_t(x) = \rho(u(x,t)) - \rho(u'(x,t))$. Then 
\begin{equation}\label{equation:4}
\partial_t\<f,\ol{u}_t\>=\<f'',\ol{\rho}_t\>-a\<f',\ol{u}_t\>.
\end{equation}
Again let $\psi_0(x)=1$, $\psi_k(x) = \sqrt{2}\cos(\pi kx)$ for $k>0$, and $\phi_k(x)=\sqrt{2}\sin(\pi kx)$. Then $\{\psi_k\}_{k=0}^\infty$ is an orthonormal basis for $L^2([0,1])$ (the eigenfunctions for the Neumann problem). Note that $\< \psi_0,\ol{u}_t\>=0$ for all $t$, and let $b_k(t)=\<\psi_k,\ol{u}_t\>$. For positive integer $N$, define
\beq
R_N(t) = \sum_{k=1}^N\frac{b_k^2(t)}{k^2},
\eeq
a positive, differentiable function with
\beq
\partial_t R_N(t) = -2\pi^2\sum_{k=1}^N b_k(t)\<\psi_k,\ol{\rho}_t\>-2\pi a(t)\sum_{k=1}^N \frac{b_k\<\phi_k,\ol{u}_t\>}{k}.
\eeq
Expanding $\ol{u}_t$ in terms of $\{\psi_k\}$, we get
\beq
\sum_{k=1}^N \frac{b_k\<\phi_k,\ol{u}_t\>}{k}=\sum_{k=1}^N \sum_{j=1}^\infty \frac{b_kb_j\<\phi_k,\psi_j\>}{k}.
\eeq
Define $\sigma(k,j)$ to be $1$ when $k-j$ is odd and $0$ otherwise, then
\beq
\<\phi_k,\psi_j\> = \frac{2k\sigma(k,j)}{\pi(k^2-j^2)},
\eeq
defined to be $0$ for $k=j$. Now we have
\beq
R_N(t) = \int_0^t-2\pi^2\sum_{k=1}^N b_k(s)\<\psi_k, \ol{\rho}_s\>-4a\sum_{k=1}^N\sum_{j=1}^\infty\frac{b_k(s)b_j(s)\sigma(k,j)}{k^2-j^2}ds.
\eeq
Absolute convergence of these sums to an $L^1([0,T])$ function will allow us to use dominated convergence. By Schwarz's inequality,
\beq
\sum_{k=1}^\infty |b_k(s)\<\psi_k, \ol{\rho}_s\>|\leq\|\ol{u}_t\|_{L^2([0,1])}\|\ol{\rho}_t\|_{L^2([0,1])} \leq \|\ol{u}_t\|_{L^2([0,1])}^2,
\eeq
which is in $L^1([0,T])$ by hypothesis. Observe that $(k^2-j^2)^2 \geq k^2(k-j)^2$, and for all $k$,
\beq
\sum_{j=1}^\infty\frac{\sigma(k,j)}{(j-k)^2}\leq\sum_{m=1}^\infty\frac{2}{m^2}.
\eeq
Then,
\begin{align*}
\sum_{k=1}^\infty\sum_{j=1}^\infty\left|\frac{b_k(s)b_j(s)\sigma(k,j)}{k^2-j^2}\right| &=\sum_{k=1}^\infty|b_k|\sum_{j=0}^\infty\left|\frac{b_j\sigma(k,j)}{k^2-j^2}	\right|\\
		&\leq \sum_{k=1}^\infty|b_k|\left(\sum_{j=0}^\infty b_j^2\right)^{1/2}\left(\sum_{j=0}^\infty\frac{\sigma(k,j)}{(k^2-j^2)^2}\right)^{1/2}\\
		&\leq \sum_{k=1}^\infty|b_k|\left(\sum_{j=0}^\infty b_j^2\right)^{1/2}\left(\frac{1}{k^2}\sum_{m=1}^\infty\frac{2}{m^2}\right)^{1/2}\\
		&= C\|\ol{u}_t\|_{L^2([0,1])}\sum_{k=1}^\infty\frac{b_k}{k}\\
		&\leq C\|\ol{u}_t\|_{L^2([0,1])}^2.
\end{align*}

We apply dominated convergence to conclude that
\begin{multline}
R(t) = \int_0^t-2\pi^2\sum_{k=1}^\infty b_k(s)\<\psi_k, \ol{\rho}_s\>-4a(s)\sum_{k=1}^\infty\sum_{j=1}^\infty\frac{b_k(s)b_j(s)\sigma(k,j)}{k^2-j^2}ds \\=\int_0^t-2\pi^2\sum_{k=1}^\infty b_k(s)\<\psi_k, \ol{\rho}_s\>ds.
\end{multline}
Now,
\beq
\sum_{k=1}^\infty b_k(s)\<\psi_k, \ol{\rho}_s\> = \< \ol{u}_t,\ol{\rho}_t \> \geq 0,
\eeq
since $\rho$ is increasing.  Since $R(0)=0$ by hypothesis, $R(t) \leq 0$, so $R(t) = 0$ for all $t\geq 0$, each $b_k(t)=0$, and $\ol{u}_t=0$, as desired. 
\end{proof}
We can now prove Theorem \ref{thm:1}. 
\begin{theorem}\label{thm:2}
Suppose that for each $P^N$, $X^N_0$ is a random vector with values a.s.\ in $\Omega_N^*$ such that $\sup_NE^N[1/N\sum_{x\in A_N}X^N_0(x)^2]<\infty$, and that its empirical measures $\{\mu^N_0(dx)\}_N$ converge weakly to the delta measure on a fixed absolutely continuous measure $u_0(x)dx$. Then a limiting measure $Q^\infty$ of the corresponding measures $Q^N$ on $D([0,T],\mathcal{M})$ exists and is the delta measure on the unique solution of a weak version (\ref{eq:weakstefan}) of the Stefan problem (\ref{eq:stefan1})-(\ref{eq:stefan2}) with initial data $u_0$. If a solution for the problem (\ref{eq:stefan1})-(\ref{eq:stefan2}) with initial data $(v_0,s_0) = (\rho(u_0),\inf\{x:u_0(x)=0\})$ exists, then the empirical measures of the process $Y_t$ converges to that solution in probability.
\end{theorem}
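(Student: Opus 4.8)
The plan is to combine the four lemmas already proved into the statement of Theorem~\ref{thm:2}, and then transfer the convergence result back from the process $X_t$ to the original process $Y_t = \rho(X_t)$ via the isomorphism established in Section~1. The key observation is that Lemmas \ref{lem:relativecompactness}, \ref{lem:l2}, \ref{lem:weaksolution}, and \ref{lem:weakuniqueness} together assemble into a standard hydrodynamic-limit argument: relative compactness guarantees subsequential limits exist, the $L^2$ bound places those limits in the right function space, the weak-solution lemma identifies any limit point as a solution of the integral equation \eqref{eq:weakstefan}, and uniqueness forces all subsequential limits to coincide.

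\medskip

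First I would verify that the hypotheses of Theorem~\ref{thm:2} supply exactly what each lemma requires. The assumption $X_0^N \in \Omega_N^*$ a.s.\ (equivalently $\Omega_N'$) and $\sup_N E^N[\tfrac{1}{N}\sum_x X_0^N(x)^2]<\infty$ is precisely the hypothesis of Lemma~\ref{lem:l2}, while the weak convergence of $\mu_0^N$ to $u_0(x)\,dx$ matches the initial-data hypothesis of Lemma~\ref{lem:weaksolution}. By Lemma~\ref{lem:relativecompactness} the sequence $\{Q^N\}$ is relatively compact, so at least one subsequential limit $Q^\infty$ exists. By Lemma~\ref{lem:l2}, any such $Q^\infty$ is concentrated on measures with density $u\in L^2([0,1]\times[0,T])$. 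By Lemma~\ref{lem:weaksolution}, $Q^\infty$-a.s.\ this density $u$ satisfies the weak equation \eqref{eq:weakstefan} with initial data $u_0$ for every test function $f\in C^2([0,1])$ with $f'(0)=f'(1)=0$. Finally, Lemma~\ref{lem:weakuniqueness} shows the solution of \eqref{eq:weakstefan} with initial data $u_0$ is unique in $L^2$, so $Q^\infty$ is the delta measure concentrated on that single solution. Since every subsequential limit equals this same delta measure, the full sequence $Q^N$ converges to it, which gives convergence in probability.

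\medskip

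For the second assertion, I would invoke Proposition~\ref{prop:equivalence}, which shows that whenever a classical solution $(v,s)$ to \eqref{eq:stefan1}--\eqref{eq:stefan2} with initial data $(v_0,s_0)=(\rho(u_0),\inf\{x:u_0(x)=0\})$ exists, the function $u(x,t)=\mathbb{1}_{\{x<s(t)\}}(v(x,t)+1)$ solves the weak form \eqref{eq:weakstefan}. By the uniqueness just established, this $u$ is \emph{the} weak solution to which the empirical measures of $X_t$ converge. Since $Y_t=\rho(X_t)$ and $\rho(u)=v$ on the appropriate region, convergence of the empirical measures of $X_t$ to $u$ transfers to convergence of the empirical measures of $Y_t$ to $v$. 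Here I would note that $\rho$, being Lipschitz and monotone, acts continuously enough on the relevant functionals that convergence in the metric $d$ is preserved; concretely, $\<f,\mu_{\rho(X_t)}\> = \<f,\mu_{X_t}\> - \tfrac{1}{N}\sum_{x\le R_t}f(x)$ and the correction term converges to $\int_0^{s(t)}f(x)\,dx$, matching the boundary term in Proposition~\ref{prop:equivalence}.

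\medskip

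The main obstacle is bookkeeping rather than any new hard estimate: one must confirm that the initial-data convergence $\mu_0^N \to u_0\,dx$ is compatible with the slightly different way the two processes record their boundaries, and that the passage from $X_t$ to $Y_t=\rho(X_t)$ genuinely preserves the limit under the Skorokhod topology. The subtle point is that the map $\eta\mapsto\rho(\eta_\cdot)$ is not continuous on all of $\mathcal{M}$, so convergence cannot be read off from a generic continuous-mapping theorem; instead one leans on the explicit identity relating $\<f,\mu_{\rho(X_t)}\>$ to $\<f,\mu_{X_t}\>$ together with the a.s.\ regularity of the limit (in particular that the free boundary $s(t)$ is a continuous function, so the correction term converges). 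All the analytic content has already been done in the four lemmas and in Proposition~\ref{prop:equivalence}, so what remains is to state the assembly carefully and handle the $\rho$-transfer.
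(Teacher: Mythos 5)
Your assembly of the first assertion is exactly the paper's: chain Lemmas \ref{lem:relativecompactness}, \ref{lem:l2}, \ref{lem:weaksolution} and \ref{lem:weakuniqueness}, then use uniqueness of the subsequential limit to upgrade relative compactness to convergence of the full sequence. The second assertion also begins the same way (Proposition \ref{prop:equivalence} plus uniqueness identifies the limit of $X_t$ as $u = v + \mathbb{1}_{[0,s(t))}$). The genuine gap is in your transfer step from $X_t$ to $Y_t=\rho(X_t)$. Your identity $\langle f,\mu_{\rho(X_t)}\rangle = \langle f,\mu_{X_t}\rangle - \tfrac{1}{N}\sum_{x\le R_t}f(x)$ is correct, but the justification that the correction term converges to $\int_0^{s(t)}f(x)\,dx$ ``because $s(t)$ is continuous'' is not a proof: that sum is a Riemann sum for $\int_0^{R_t}f(x)\,dx$, so for strictly positive $f$ its claimed convergence is \emph{equivalent} to $R_t\to s(t)$ in probability, i.e.\ convergence of the particle system's discrete free boundary to the free boundary of the PDE. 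Continuity of $s$ has nothing to do with why that holds; as written you have assumed the crux of the transfer rather than proved it.

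The gap is fillable, and it is worth seeing how. Direct route: since $X_t\in\Omega_N'$, we have $X_t(x)\ge 1$ for $x\le R_t$ and $X_t(x)=0$ for $x>R_t$, while the limit density satisfies $u\ge 1$ on $[0,s(t))$ (as $v\ge0$) and $u=0$ on $[s(t),1]$; testing the already-established convergence $\mu_{X_t}\to u\,dx$ against nonnegative bump functions supported in $(s(t)-\epsilon,s(t))$ and in $(s(t),s(t)+\epsilon)$ forces $P^N[\,|R_t-s(t)|>\epsilon\,]\to 0$, which is exactly the missing statement. The paper takes a different route for this step: it cites the estimate from the proof of Lemma \ref{lem:weaksolution}, $|\langle f'',\rho(M_\epsilon\mu_\eta)\rangle-\langle f'',\mu_{\rho(\eta_\cdot)}\rangle|\le M/(2\epsilon N)+4M\epsilon+C_N+C_\epsilon$, uniform over $\Omega_N'$, in which the boundary terms $\int_0^{R_t}f''$ cancel between the two decompositions, so the limit of $R_t$ never needs to be identified; one then uses continuity of $\mu\mapsto\langle f'',\rho(M_\epsilon\mu)\rangle$ and $M_\epsilon u\to u$ in $L^2$. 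Note, though, that this covers only test functions of the form $f''$ with $f'(0)=f'(1)=0$, which have mean zero; matching total masses, $\langle 1,\mu_{\rho(X_t)}\rangle = 1-R_t-1/2N$ versus $\int_0^1 v(x,t)\,dx = 1-s(t)$, again comes down to $R_t\to s(t)$, so the short boundary argument above is needed in any case. In summary: your outline has the right shape and matches the paper's structure, but the boundary-convergence step must be proved, not asserted.
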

\begin{proof}
The first part of the theorem follows from Lemmas \ref{lem:relativecompactness}, \ref{lem:l2}, \ref{lem:weaksolution}, and \ref{lem:weakuniqueness}. Given a strong solution $(v,s)$ with initial data $(\rho(u_0),\inf\{x:u_0(x)=0\})$, we have $u_0(x) = \rho(u_0(x)) + \mathbb{1}_{[0,s_0]}$, which is easily checked since $X_t \in \Omega_N^*$ a.s. Therefore, by Proposition \ref{prop:equivalence}, $u(x) \mathrel{\mathop:}= v(x) + \mathbb{1}_{[0,s(t)]}$ is a solution to (\ref{eq:weakstefan}) and, by uniqueness, is the density of the limiting measure of the process $X_t$. In order for $Y_t = \rho(X_T)$ to converge to $v = \rho(u)$, we need $\mu_{\rho(X_t)} \rightarrow \rho(u(x,t))dx$. Indeed, this fact is proved in the proof of Lemma 3.2, and the second part of the theorem is proved.
\end{proof}

\section{Elastic exclusion}
Finally, we return to the regime of reflecting intervals by describing the hydrodynamic limit of the exclusion process $Z_t$. In this section we presume existence of a strong solution, a reasonable assumption that makes the following proofs and calculations more natural. First we describe the dynamics of $Z_t$ more precisely. 

Each site is occupied by zero or one particles, except for $1-1/2N$, which is always empty. Given $Z_t = \theta$, a state $\zeta$ is accessible in one of three cases: first, if for some $1\leq k<N-1$, $\theta_{(2k-1)/2N}=1, \theta_{(2k+1)/2N} = 0$, $\zeta_{(2k-1)/2N}=0, \zeta_{(2k+1)/2N} = 1$, and otherwise the states are equal. Second, if for $k=N-1$, we instead have $\zeta_{(2k+1)/2N}=0$, so that the particle is "killed" here. In either of these case, $\theta$ moves to $\zeta$ with rate $N^2$ times the length of the block of occupied sites to the left of $(2k+1)/2N$ in $\theta$. Third, if $x$ is the first unoccupied site of $\theta$, the state $\zeta$ such that $\zeta_x=1$ and $\zeta_y=\theta_y$ elsewhere is reached at rate $aN$, and we think of the entire block being pushed over to make room for a new particle.
For each $N$, define, for $x\in A_N, x<S_t$,
\beq
U_t(x) = x + \frac{1}{N}\sum_{z \leq x} \rho(X_t(z)).
\eeq
Note that $U_t(S_t-1/N) = 1 - 1/2N.$ Define
\beq
\Psi(X_t)(y) =
\begin{cases}
0 \text{ if } y \in U_t(A_N\cap [0,S_t))\\
1 \text{ else}
\end{cases}
\eeq
so that, for $x<S_t$, between unoccupied sites $y=U_t(x-1/N)$ (or $y=0$ if $x = 1/2N$) and $y'=U_t(x)$ of $\Psi(X_t)$, there are $\rho(X_t(x))$ occupied sites. It is not hard to check that the dynamics of $\Psi(X_t)$ are identical to those of $Z_t$. We can write the inverse map explicitly: For $y \in A_N$ such that $Z_t(y) = 0$,
\beq
T_t (y) \equiv U_t^{-1}(y) = y - \frac{1}{N} \sum_{z<y}Z_t(z),
\eeq
and, given $Z_t$, we extend the domain of $T_t$ to all of $A_N$ with the same formula.  We also define analagous transformations for the solution $v$ of (\ref{eq:stefan1})-(\ref{eq:stefan2}). Let
\beq
\upsilon_t(x) = x + \int_0^x v(x,t)dx,
\eeq
for $0\leq x\leq s(t)$, and, since $v$ is nonnegative and differentiable on $[0,s(t))$, we can define a differentiable inverse $\tau_t = \upsilon^{-1}_t$, and let 
\beq
z(y,t) \mathrel{\mathop:}= \psi(v)(y,t) \mathrel{\mathop:}= v(\tau_t(y),t)\tau'_t(y),
\eeq
so that 
\beq
\tau_t(y) = y - \int_0^y z(y,t)dy.
\eeq
and
\beq
v(x,t) = z(\upsilon_t(x), t)\upsilon_t'(x).
\eeq
One can check that for $z$ so defined, $0\leq z<1$ on $[0,1]$. Given $z:[0,1]\rightarrow [0,1)$, we can similarly define $\tau$ and $v$ with the last two equations, where $\upsilon$ is defined to be the inverse to $\tau$. Next we identify the hydrodynamic equation that functions $z$ should satisfy:
\begin{lemma}
Given functions $z$ and $v$, defined on the appropriate regions, satisfying the relations above for all $t$, $z$ is a solution to the differential equation:
\begin{align}
&\partial_tz(y,t) = \partial_{x}(K(z(y,t))\partial_yz(y,t)) \quad &0<y<1, t>0, \label{equation:9}\\
	&z(x,0) = z_0(x) &0\leq x\leq 1\\
	&K(z(y,t))\partial_yz(y,t)|_{y=0} = a &t>0,\\
	&z(0,t)=0 &t>0\label{equation:8},
\end{align}
with $K(z) = 1/(1-z)^2$, if and only if $v$ is a solution to \ref{eq:stefan1}-\ref{eq:stefan2} with $s(t) = 1-\int_0^1 z(x,t)dx$.
\end{lemma}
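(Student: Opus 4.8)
The plan is to treat $y = \upsilon_t(x)$ as a time-dependent change of variables and push the Stefan problem forward through it. First I would record the pointwise algebraic relations that follow directly from the definitions. Differentiating $\upsilon_t(x) = x + \int_0^x v(\xi,t)\,d\xi$ in $x$ gives $\upsilon_t'(x) = 1 + v(x,t)$, so $\tau_t'(y) = 1/(1+v)$ and hence
\[
z = \frac{v}{1+v}, \qquad v = \frac{z}{1-z}, \qquad 1-z = \frac{1}{1+v}, \qquad K(z) = \frac{1}{(1-z)^2} = (1+v)^2,
\]
where $v$ is evaluated at $x = \tau_t(y)$. These identities already show $0 \le z < 1$ and that $v \mapsto z$ is a bijection between the relevant function classes whenever $\upsilon_t$ is a $C^1$ diffeomorphism, i.e.\ when $v \ge 0$ and $v \in C^1$.

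Next I would compute the spatial flux. Using $\partial x/\partial y = \tau_t'(y) = 1/(1+v)$ and the chain rule, $z_y = v_x/(1+v)^3$, so the nonlinear flux collapses to
\[
K(z)\,z_y = (1+v)^2 \cdot \frac{v_x}{(1+v)^3} = \frac{v_x}{1+v},
\]
and differentiating once more in $y$ (again picking up a factor $1/(1+v)$ from $\partial x/\partial y$) yields
\[
\partial_y\big(K(z)\,z_y\big) = \frac{v_{xx}(1+v) - v_x^2}{(1+v)^3}.
\]

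The crux is the time derivative at fixed $y$, where the moving frame enters. Differentiating $z = v(\tau_t(y),t)/(1+v(\tau_t(y),t))$ gives $z_t = (1+v)^{-2}\big(v_t + v_x\,\partial_t\tau_t(y)\big)$, and I would obtain $\partial_t\tau_t(y)$ by differentiating the identity $\upsilon_t(\tau_t(y)) = y$ in $t$, getting $\partial_t\tau_t(y) = -\Phi_t/\Phi_x$ with $\Phi_t = \int_0^x v_t\,d\xi$. Here the PDE \ref{eq:stefan1} and the left boundary condition do the work: $\int_0^x (v_{xx} + a v_x)\,d\xi = (v_x + av)|_x - (v_x + av)|_0 = (v_x + av)|_x + a$, using $(v_x + av)|_{x=0} = -a$. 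Substituting $v_t = v_{xx} + a v_x$ and simplifying, the drift terms cancel and
\[
z_t = \frac{v_{xx}(1+v) - v_x^2}{(1+v)^3} = \partial_y\big(K(z)\,z_y\big),
\]
which is exactly \ref{equation:9}. I expect this cancellation --- the precise way the $a v_x$ drift and the Robin datum $-a$ conspire to produce the clean divergence form --- to be the main obstacle and the step that must be carried out with care.

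Finally I would check the boundary data and the constraint, then the converse. At $y = 0$ (so $x = 0$) the flux reduces, via $v_x(0) = -a(1+v(0))$, to $K(z)z_y|_{y=0} = v_x(0)/(1+v(0)) = -a$, the stated flux condition (with attention to the flux-sign convention); the mass constraint $s(t) = 1 - \int_0^{s(t)} v\,dx$ gives $\upsilon_t(s(t)) = 1$, so the killing boundary $y = 1$ corresponds to $x = s(t)$ and $z|_{y=1} = v(s(t),t)/(1+v(s(t),t)) = 0$ by $v(s(t),t)=0$; and the same substitution $y = \upsilon_t(x)$, $dy = (1+v)\,dx$, gives $\int_0^1 z\,dy = \int_0^{s(t)} v\,dx = 1 - s(t)$, i.e.\ $s(t) = 1 - \int_0^1 z\,dy$, with the initial data matching by definition. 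For the converse, since every identity above is an equivalence and the pair $(v,\tau) \leftrightarrow (z,\upsilon)$ is explicit and invertible under the stated regularity and positivity, I would read the computation backwards: starting from a solution $z$ of \ref{equation:9}--\ref{equation:8}, define $v(x,t) = z(\upsilon_t(x),t)\upsilon_t'(x)$ and reverse each step to recover \ref{eq:stefan1}--\ref{eq:stefan2}. Thus the two problems are equivalent.
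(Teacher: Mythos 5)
Your proposal is correct and is essentially the paper's own proof run in the opposite direction: the paper pulls $v_x$, $v_{xx}$, $v_t$ back through the change of variables, while you push $z_y$, $z_t$ forward, but both arguments rest on the same two ingredients --- the chain-rule identities coming from $\upsilon_t'(x)=1+v$, and the evaluation of the time derivative of the moving frame via $\int_0^x v_t\,d\xi=(v_x+av)\big|_0^x$ together with the Robin condition at $x=0$, which yields exactly the drift cancellation you single out. One further remark, in your favor: your signs are the standard (and correct) ones, and they expose typos in the lemma as stated. With the given relations one has $v_x = z_y/(1-z)^3$, so the left boundary condition is $K(z)\partial_y z|_{y=0} = v_x(0,t)/(1+v(0,t)) = -a$ (an influx of $a$), not $+a$, and the Dirichlet condition should read $z(1,t)=0$ rather than $z(0,t)=0$, as your verification that $\upsilon_t(s(t))=1$ shows. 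The paper's derivation of ``$+a$'' traces to its sign slip $v_x = -z_y/(1-z)^3$; that flip propagates consistently and cancels from the interior equation (so your PDE computation and the paper's agree there) but not from the boundary flux, so your parenthetical flag about the flux-sign convention, together with your explicit check of the $y=1$ condition and the mass identity $s(t)=1-\int_0^1 z\,dy$ (neither of which the paper's proof addresses), is exactly the right way to handle it.
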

\begin{proof}
We will expand the equation
\beq
\partial_{xx}v(x,t) + a\partial_x v(x,t) = \partial_t v(x,t),
\eeq
using
\beq
\partial_x\upsilon_t(x) =1+v(x,t) = \frac{1}{1-z(\upsilon_t(x),t)}.
\eeq
Let $z = z(\upsilon_t(x),t)$, let $z_t$ and $z_y$ refer to the partial derivatives of $z$ with respect to its variables evaluated at $(\upsilon_t(x),t)$. We first check the equivalence of the conditions at the left boundary. The function $v$ is continuous up to the boundary so $\upsilon$ has a derivative at $0$ and the above equation holds.  This gives us equivalence of
\begin{align*}
v_x + a(v+1) &= 0, \\
\frac{-z_y}{(1-z)^3} + \frac{a}{1-z}& = 0,\\
\frac{-z_y}{(1-z)^2} &= -a,
\end{align*}
at $x=\upsilon_t(x)=0$. Next, for $0<x<s(t)$, $0<\upsilon<1$,

\begin{align*}
v_x(x,t) &= \frac{-z_y}{(1-z)^3}, \\
v_{xx}(x,t) &= \partial_x \left(\frac{z_y}{(1-z)^2}\cdot \frac{-1}{1-z}\right)\\
	&= \frac{-1}{1-z}\partial_x\frac{z_y}{(1-z)^2} + \frac{z_y^2}{(1-z)^5},\\
\end{align*}
so
\begin{align*}
v_{xx}+av_x &= \frac{-1}{(1-z)^2}\partial_{\upsilon_t(x)}\frac{z_y}{(1-z)^2}+ \frac{z_y^2}{(1-z)^5} -\frac{az_y}{(1-z)^3}.
\end{align*}
Then
\begin{align*}
\partial_t \upsilon_t(x) &= -\int_0^x \partial_t v(r,t) dr\\
 &= v_x (x,t) + av(x,t) - (v_x(0,t) + av(0,t)).
 \end{align*}
Under either set of conditions, the last term is equal to $-a$, giving
 \begin{align*}
 \partial_t \upsilon_t(x)= \frac{-z_y}{(1-z)^3} + \frac{a}{1-z},
 \end{align*}
 so
 \begin{align*}
 \partial_t v(x,t) &= \frac{-\partial_t \upsilon_t(x)z_y-z_t}{(1-z)^2}\\
 &=\frac{-z_t}{(1-z)^2} + \frac{z_y^2}{(1-z)^5} - \frac{az_y}{(1-z)^3},
 \end{align*}
 and $v_{xx} + av_x = v_t$ is equivalent, for $y = \upsilon_t(x)$, to
 \begin{align*}
 \partial_t(z(y,t)) = \partial_y \frac{\partial_yz(y,t)}{(1-z(y,t))^2},
 \end{align*}
 as desired.
 \end{proof}
 \begin{theorem}\label{thm:5}
 The process $Z_t = \Psi(X_t)$, where $X_t$ has initial data $u_0$ as in Theorem \ref{thm:2}, converges weakly in $D([0,T],\mathcal{M})$ to the measure with density that is the unique solution to the differential equation (\ref{equation:9})-(\ref{equation:8}) with initial data $z_0 = \psi(u_0)$, assuming such a solution exists.
 \end{theorem}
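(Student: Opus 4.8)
The plan is to transport the hydrodynamic limit already established for $X_t$ through the discrete space transform $\Psi$, using that on the continuum side the transform $\psi$ of the preceding lemma (the $z$--$v$ equivalence) carries the Stefan profile $v$ to the elastic profile $z$. By Theorem \ref{thm:2}, the empirical measures $\mu_{\rho(X_t)}$ of $Y_t=\rho(X_t)$ converge in probability, uniformly in $t\in[0,T]$, to the deterministic path $v(\cdot,t)\,dx$, where $v=\rho(u)$ solves \ref{eq:stefan1}--\ref{eq:stefan2}. Since the target is deterministic, weak convergence is convergence in probability, and it suffices to prove $\<g,\mu_{Z_t}\>\to\<g,z(\cdot,t)\>$ for each of the Lipschitz test functions $g=f_j$ defining the metric $d$ on $\M$, with enough uniformity in $t$ to pass to convergence in $D([0,T],\M)$.

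\textbf{The rearrangement identity.} The central observation is that $\Psi$ redistributes the mass of $\rho(X_t)$ along the increasing map
\beq
U_t(x)=x+\frac1N\sum_{z\le x}\rho(X_t(z))=x+\<\mathbb{1}_{[0,x]},\mu_{\rho(X_t)}\>,
\eeq
since between consecutive images $U_t(x-1/N)$ and $U_t(x)$ there sit exactly $\rho(X_t(x))$ occupied sites of $Z_t$, located at $U_t(x)-j/N$ for $1\le j\le\rho(X_t(x))$. Thus $\mu_{Z_t}$ is, up to the spreading of each block over an interval of length $\rho(X_t(x))/N$, the pushforward $(U_t)_\ast\mu_{\rho(X_t)}$, and for $g$ with Lipschitz constant $L$ one estimates
\beq
\left|\<g,\mu_{Z_t}\>-\<g\circ U_t,\mu_{\rho(X_t)}\>\right|\le \frac{L}{N}\cdot\frac1N\sum_{x\in A_N}\rho(X_t(x))^2.
\eeq
By Lemma \ref{lem:l2} the right-hand side has time-integrated expectation $O(1/N)$, so this discretization error vanishes.

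\textbf{Passing to the limit.} It remains to treat $\<g\circ U_t,\mu_{\rho(X_t)}\>$. Because $U_t(x)=x+\<\mathbb{1}_{[0,x]},\mu_{\rho(X_t)}\>$ and the limiting cumulative $x\mapsto\int_0^x v(\cdot,t)$ is continuous, convergence of $\mu_{\rho(X_t)}$ forces $U_t\to\upsilon_t$ uniformly on $[0,1]$ (uniform convergence of a monotone family toward a continuous limit), hence $g\circ U_t\to g\circ\upsilon_t$ uniformly. Combining with $\mu_{\rho(X_t)}\to v(\cdot,t)\,dx$ gives $\<g\circ U_t,\mu_{\rho(X_t)}\>\to\<g\circ\upsilon_t,v(\cdot,t)\>$, and the change of variables $y=\upsilon_t(x)$, using $\upsilon_t(s(t))=1$ by \ref{eq:stefan2} and $z(y,t)=v(\tau_t(y),t)\tau_t'(y)$, yields
\beq
\<g\circ\upsilon_t,v(\cdot,t)\>=\int_0^{s(t)}g(\upsilon_t(x))\,v(x,t)\,dx=\int_0^1 g(y)\,z(y,t)\,dy=\<g,z(\cdot,t)\>.
\eeq
By the preceding lemma, $z=\psi(v)$ is exactly the (assumed unique) solution of \ref{equation:9}--\ref{equation:8} with the stated initial data $z_0=\psi(u_0)$, which identifies the limit.

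\textbf{Main obstacle.} The principal difficulty is uniformity in $t$: to obtain convergence of the whole path in $D([0,T],\M)$, rather than at fixed times, both the block-spreading error and the transform convergence must be controlled uniformly in time and one must rule out spurious limit points. The error term is handled by the time-integrated $L^2$ bound of Lemma \ref{lem:l2}, and the transform convergence is uniform in $t$ because $v$ is jointly continuous up to the moving boundary, so the limiting cumulatives form an equicontinuous family; relative compactness of the laws of $\mu_{Z_t}$ (obtained exactly as in Lemma \ref{lem:relativecompactness}) together with the fixed-time identification above upgrades the convergence to the path level, the limit path being continuous. A minor point is the comparison of the discrete and continuum transforms near the free boundary $S_t$ and the killed site, where only $O(1/N)$ edge contributions arise and are absorbed into the error already controlled.
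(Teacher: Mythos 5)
You follow essentially the same route as the paper---transport Theorem \ref{thm:2} through the space transforms, convert weak convergence of $\mu_{\rho(X_t)}$ into uniform convergence of the transforms by a monotonicity argument, and identify the limit via the change of variables $\<g\circ\upsilon_t,v(\cdot,t)\>=\<g,z(\cdot,t)\>$---and most of your steps are correct. The genuine gap is in how you dispose of the block-spreading error $\frac{L}{N^2}\sum_x\rho(X_t(x))^2$. Lemma \ref{lem:l2} controls this quantity only after integration in $dt$, so you obtain vanishing of the error in a time-averaged sense; but the theorem asserts convergence in $D([0,T],\mathcal{M})$ to a continuous deterministic path, which amounts to uniform-in-$t$ convergence in probability, and a time-integrated $O(1/N)$ bound cannot by itself exclude the error being of order one at some random times. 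Your proposed patch---relative compactness of the laws of $\mu_{Z_t}$ ``obtained exactly as in Lemma \ref{lem:relativecompactness}''---does not go through: that proof rests on the pointwise bound $|\mathcal{L}_N\<f,X_s\>_N|\le C$, which holds for $X_t$ because the generator pairs the bounded functions $\Delta_Nf$, $D_Nf$ against measures of mass at most one. For the elastic exclusion $Z_t$ the jump rates are proportional to block lengths, and the generator applied to $\<f,Z_s\>_N$ carries a term of order $\|f''\|_\infty\frac{1}{N}\sum_x\rho(X_s(x))^2$, which is not uniformly bounded (a single block may contain a positive fraction of all particles, making this of order $N$); it too is controlled only after time integration, and a time-integrated bound inserted into the stopping-time criterion does not vanish as $\theta\to0$ uniformly over stopping times, since the mass of the integral can concentrate on a short random interval. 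So the path-level upgrade is unproven as written.

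The repair is cheap and is implicitly what the paper does. On the event $E^c$ where the empirical cumulatives are uniformly within $\delta$ of $\int_0^xv(z,t)dz$---the very event you already use to prove $U_t\to\upsilon_t$---no site can carry a macroscopic pile: $\rho(X_t(x))/N$ is the jump of the empirical cumulative at $x$, hence $\max_x\rho(X_t(x))/N\le2\delta+\omega(1/N)$ uniformly in $t$, where $\omega$ is the modulus of continuity of the limiting cumulative; consequently $\frac{1}{N^2}\sum_x\rho(X_t(x))^2\le\max_x\rho(X_t(x))/N\le2\delta+\omega(1/N)$ uniformly in $t$, so your error term is small on an event of probability tending to one, with no appeal to Lemma \ref{lem:l2} or to tightness of $Z_t$. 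The paper avoids the error term altogether by pulling back instead of pushing forward: since $T_t$ is constant on each block, the identity $\frac{1}{N}\sum_yf(\upsilon_t(T_t(y)))Z_t(y)=\frac{1}{N}\sum_xf(\upsilon_t(x))\rho(X_t(x))$ holds exactly, and the only estimate needed is that $\sup_{y,t}|y-\upsilon_t(T_t(y))|$ is small on $E^c$, obtained by the same monotonicity contradiction you use; uniform convergence in probability then gives convergence in $D([0,T],\mathcal{M})$ directly, with no compactness argument for $Z_t$ at all.
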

 \begin{proof}
We will prove that for $\delta >0$, $f \in C([0,1])$,
\begin{align*}
P^N\left[ \sup_{0\leq t \leq T} \left| \int_0^1 f(y)\mu_{Z_t(y)}dy - \int_0^1 f(y)z(y,t)dy\right| > \delta \right] \rightarrow 0,
\end{align*}
which suffices to prove the theorem. First, since $v$ is sufficiently smooth, if we extend $f(\upsilon_t(x))$ to be $0$ where it is not defined, then our previous theorem gives
\begin{align*}
P^N\left[ \sup_{0\leq t \leq T} \left| \int_0^1 f(\upsilon_t(x))\mu_{\rho(X)_t(x)}dx - \int_0^1 f(\upsilon_t(x))v(x,t)dx\right| > \delta \right] \rightarrow 0,
\end{align*}
and since the last terms in the two differences are equivalent by a change of variables, we need to look at the difference of 
\begin{align*}
\int_0^1 f(y)\mu_{Z_t(y)} = \frac{1}{N} \sum_{y\in A_N} f(y)Z_t(y)
\end{align*}
and
\begin{align*}
\int_0^1 f(\upsilon_t(x))\mu_{\rho(X_t)}(dx) = \frac{1}{N} \sum_{x \in A_N} f(\upsilon_t(x)) \rho(X_t(x)) = \frac{1}{N} \sum_{y \in A_N} f(\upsilon_t(T_t(y)) Z_t(y).
\end{align*}
Since $Z\leq 1$ and $f$ is continuous on a compact interval, the problem is reduced to the difference of $y$ and $\upsilon_t(T_t(y))$. By Theorem \ref{thm:2} and the fact that the limiting measure is continuous, we have 
\begin{align*}
P^N\left [ \sup_{0\leq t\leq T}\sup_{0\leq x \leq 1} \left|\int_0^x v(z,t)dz - \mu_{\rho(X_t)}(dz)\right|>\delta\right]=P^N\left[E\right] \rightarrow 0,
\end{align*}
where $E$ is the set on the left hand side. For fixed $t$, suppose $y = \upsilon_t(x) = x + \int_0^x v(z,t)dz$, and we claim that for large enough $N$, in the set $E^c$,
\begin{align*}
\left| T_t(y) - x \right | < \delta.
\end{align*}
 Indeed, suppose $T_t(y) < x - \delta$. Then
 \begin{align*}
 y - \frac{1}{N}\sum_{z<y}Z_t(z)  &< x - \delta\\
 \frac{1}{N}\sum_{z \leq T_t(y)} \rho(X_t) & > \int_0^x v(z,t)dz + \delta \\
 \frac{1}{N}\sum_{z \leq T_t(y)} \rho(X_t) & > \frac{1}{N}\sum_{z \leq x} \rho(X_t)(z),
 \end{align*}
 and $T_t(y) > x$, a contradiction. A contradiction results from the opposite inequality in the same way. Then, $\upsilon_t$ being continuous, we obtain a bound on the difference
 \begin{align*}
 \left|\upsilon_t(T_t(y)) - \upsilon_t(x)\right| = \left|\upsilon_t(T_t(y)) - y\right|,
\end{align*}
which gives the result we need and completes the proof of the theorem.
 
\end{proof} 

{\bf Acknowledgements:} The author would like to thank his advisor, Krzysztof Burdzy, for suggesting the problem and for helpful discussions and advice. This research was partially supported by RTG grant 0838212.

\bibliographystyle{amsplain}
\bibliography{jb-bibliography}
\end{document}